\newtheorem{theorem}{Theorem}[section]
\newtheorem{lemma}[theorem]{Lemma}
\newtheorem{proposition}[theorem]{Proposition}
\newtheorem{definition}[theorem]{Definition}
\title{New Constructions of Cubature Formulas on Wiener Space}   
\author{Timothy Herschell}             
\date {}
\begin{document}

\setcounter{secnumdepth}{3}
\setcounter{tocdepth}{3}

\maketitle
\begin{abstract}
    Building on techniques developed by Lyons and Victoir \cite{LyonsTerry2004CoWs}, we present the first explicit construction of a degree-7 cubature formula for Wiener space over R3. We then examine and compare two approaches for computing cubature approximations: one based on the stochastic Taylor expansion and the other on the Log-ODE method. Our numerical experiments illustrate how the cubature degree influences the order of convergence and demonstrate the utility of cubature methods for weak approximations of stochastic differential equations (SDEs).

    These results were originally part of a Master’s thesis and are provided here as context and a reference point for subsequent work. A more general construction in arbitrary dimensions has since been obtained in \cite{highdegreecubaturewienerspace} using different techniques.
\end{abstract}

\section{Introduction}
The conception of stochastic analysis brought forth a multitude of techniques that revolutionised our understanding of random processes. It emerged that, unlike classical analysis, there is no unified theory of integration in stochastic analysis. Multiple ideologies, such as the popular It\^o and Stratonovich formulations, could be equally justifiable yet give seemingly different results. Furthermore, when processes do not meet strict smoothness conditions, the stochastic differential equations they govern may have discontinuous solution maps. Such paths are called \textit{rough paths} and were first tackled by Lyons in his landmark paper \cite{alma991025219949807026}, which has served as the inspiration for countless breakthroughs in the field since.

This work will concern itself with one particularly pivotal example of a rough path, Brownian motion. Brownian motion has a H\"older exponent of $\alpha=\frac{1}{2}$, so is on the cusp of the rough path classification. We will use Brownian motion in $\mathbb{R}^d$, denoted $B_t$, as a driving signal in controlled differential equations defined over some domain $t\in[0,T]$. That is,
\begin{equation}\label{eq:CDE-intro}
    \text{d}X_t=f(X_t,t)\circ\text{d}B_t,
\end{equation}
where $f$ is some bounded, Lipschitz function. It becomes useful to augment the Brownian motion with an extra dimension representing time, to incorporate a constant drift factor into (\ref{eq:CDE-intro}).

When $d>1$, equations of the form (\ref{eq:CDE-intro}) often admit no analytical solutions. This led to the conception of ``Cubature on Wiener Space", written by Lyons and Victoir \cite{LyonsTerry2004CoWs}, which has inspired this work. Lyons and Victoir developed a numerical approximation scheme (referred to as \textit{cubature}) for solving equations of the form (\ref{eq:CDE-intro}) in expectation. For brevity, we will sometimes refer to a solution for $\mathbb{E}[X_t]$ as a \textit{weak solution}. Heuristically, a cubature formula on Wiener space is a series of (augmented) Brownian motion sample paths that approximate the expectation of Brownian motion up to some degree $m$. The details of this will require the notion of a \textit{path signature}, a path enhanced with additional feature information that ensures continuity of solution paths. The full path signature is composed of an infinite number of levels, though often truncated after some point, and the cubature formula will be said to have degree $m$ if it matches the signature up to level $m$ in expectation. 

Lyons and Victoir laid out a framework from which to develop explicit constructions of cubature formulae on Wiener space and included examples of degree 3 and degree 5 formulae. Despite this, no explicit construction has been published for the next degree of interest, degree 7, at least not in general dimension $d$. There are various publications for certain cases including the cases $d=1$ and $d=2$ given by Litterer \cite{LittererThesis} and an implicit construction for general dimension given by Gyurk\'o and Lyons \cite{alma991025225271007026}. This work will present a degree 7 construction explicitly.

The second half of this work will explore the importance of higher degree cubature formulae. We will take two stochastic differential equation solving techniques, namely the stochastic Taylor expansion \cite{alma991022208701907026} and the Log-ODE method \cite{CastellFabienne1995Aeam}, and apply them to produce weak solutions of (\ref{eq:CDE-intro}). Both methods exhibit error terms bounded above by $CT^\frac{m+1}{2}$, for some constant $C$ that is independent of time $T$. Therefore, approximating over smaller and smaller sub-intervals of $[0,T]$ should see the error converge to zero at an exponential rate proportional to degree $m$.

The general structure of this work will be as follows. Chapter \ref{chap:prelim} will present some preliminary knowledge and notation to be used throughout the remainder of the work. In particular, we will introduce cubature formulae in the context of a positive measure and lay out robust definitions of Brownian motion and Wiener space.

Chapter \ref{chap:cub-def} will be dedicated to giving a formal definition of a cubature formula on Wiener space. This will require a fundamental knowledge of tensor algebras and free lie algebras, detailed in Section \ref{sec:algebras}, and of path signatures, detailed in Section \ref{sec:path-sigs}. The definition of Wiener space cubature will be given in Section \ref{sec:cub-def} and will appear in two forms, firstly in terms of sample paths and secondly as elements of the free lie algebra, known as \textit{lie polynomials}.

Chapter \ref{chap:construct} will contain explicit cubature constructions for degree 3, degree 5 and degree 7. The former two are taken from Lyons and Victoir \cite{LyonsTerry2004CoWs}, and the latter is a newly developed construction. These constructions are \textit{efficient} in the sense that they grow in size at a rate proportional to a polynomial in the Wiener space dimension $d$.

Chapter \ref{chap:uses} will detail two cubature methods: the Taylor method and the Log-ODE method. We will include formal results regarding the error term associated with each method. The chapter will conclude with a short discussion about approximating the expected signature of a process satisfying (\ref{eq:CDE-intro}).
\section{Preliminaries}\label{chap:prelim}
This chapter is dedicated to laying out some preliminary knowledge that will be applied in the remainder of the work. The main function of the chapter is to clearly lay down notational standards that will be maintained throughout. 

\subsection{Cubature formula for a positive measure}
A vast number of differential equations have no analytic solutions and for this reason, almost any applied mathematician will rely upon suitable numerical approximations at some stage. Some authors may refer to numerical integration by means of a \textit{quadrature formula} (or if the integrand has dimension two or more, a \textit{cubature formula}). In the context of a positive measure, the precise definition of a cubature formula is the following.
\begin{definition}[Cubature Formula for a Positive Measure]
    Let $\mu$ be a positive measure on $\mathbb{R}^d$, and let $m\in\mathbb{Z}$. A set of points, $x_1,\cdots,x_n\in\mathbb{R}^d$ and a set of weights $\lambda_1,\cdots,\lambda_n\in\mathbb{R}$ are said to define a cubature formula of degree $m$ if,
    \[
    \int_{\mathbb{R}^d}P(x)\mu(\text{d}x)=\sum_{i=1}^n\lambda_iP(x_i)
    \]
    for every polynomial $P\in\mathbb{R}_m[x_1,\cdots,x_d]$ with degree no more than $m$. If $d=1$, then we instead use the term \textit{quadrature formula}.
\end{definition}
Positive measures will not be the focus of this work, but they provide a backstory to the concept of cubature formulae on Wiener space. Let us first explore one introductory example, the Bernoulli measure. This specific example will be used to make explicit constructions in Chapter \ref{chap:construct}.
\begin{proposition}[Cubature formula for the Bernoulli measure]
    Let $m\in\mathbb{N}$. Let $\mu$ be the Bernoulli measure on $\mathbb{R}^d$. That is,
    \[
        \mu (x) = \begin{cases}
            2^{-d}&\text{if }x\in S;\\
            0&\text{otherwise;}
        \end{cases}
    \]
    where $S=\big \{x=(x_1,\cdots,x_d):x_i\in\{-1,1\}\text{ for all }i=1,\cdots,d\big\}$. Then the points $x_i\in\big\{(a_1,\cdots,a_d)\ :\ a_k\in\{-1,1\}\big \}$ and associated weights $\lambda_i=\frac{1}{2^d}$ define a cubature formula of degree $m$ for $\mu$. 
\end{proposition}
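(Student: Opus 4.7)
The plan is to observe that the Bernoulli measure $\mu$ is a discrete measure whose support is the finite set $S$ consisting of exactly $2^d$ points, namely the vertices of the hypercube $\{-1,1\}^d$, each carrying mass $2^{-d}$. For any measurable function $f:\mathbb{R}^d\to\mathbb{R}$, integration against such a measure collapses to a finite sum,
\[
\int_{\mathbb{R}^d} f(x)\,\mu(\mathrm{d}x) \;=\; \sum_{x\in S} 2^{-d} f(x).
\]
Specialising to $f = P$ for a polynomial $P\in\mathbb{R}_m[x_1,\ldots,x_d]$ and enumerating $S=\{x_1,\ldots,x_{n}\}$ with $n=2^d$, this immediately yields the cubature identity with weights $\lambda_i=2^{-d}$.

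Thus the only thing to carry out is to set up the enumeration: let $x_1,\ldots,x_{2^d}$ denote (in any fixed order) the $2^d$ elements of $S$. Then, substituting into the displayed equation above with $f=P$,
\[
\int_{\mathbb{R}^d}P(x)\,\mu(\mathrm{d}x) \;=\; \sum_{i=1}^{2^d} 2^{-d}\, P(x_i) \;=\; \sum_{i=1}^{2^d}\lambda_i P(x_i),
\]
which is precisely the cubature identity of degree $m$.

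There is essentially no obstacle: the argument does not use the polynomial structure of $P$ at all, and it works for every $m\in\mathbb{N}$ (indeed for every bounded measurable $P$). The only conceptual remark worth including is that the degree $m$ plays no role here because $\mu$ is already an atomic measure supported on finitely many points; the nontrivial content of cubature theory will appear only once $\mu$ is continuous (for instance, the Wiener measure in later chapters), where matching polynomials up to degree $m$ becomes a genuine constraint.
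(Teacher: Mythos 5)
Your proof is correct, and it takes a slightly more direct route than the paper. The paper reduces (implicitly, by linearity) to a monomial $P(x)=x_1^{k_1}\cdots x_d^{k_d}$, rewrites the integral as the moment $\mathbb{E}[\Lambda_1^{k_1}\cdots\Lambda_d^{k_d}]$ of a $d$-dimensional Bernoulli vector, evaluates this as $\mathds{1}_{k_1\text{ even}}\cdots\mathds{1}_{k_d\text{ even}}$, and then identifies the result with the weighted sum over the $2^d$ points. You instead observe that $\mu$ is purely atomic, with atoms exactly the proposed nodes and masses exactly the proposed weights, so the integral literally \emph{is} the cubature sum for every integrable function; the polynomial structure and the degree $m$ play no role. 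Your observation is the cleaner explanation of why the statement holds for all $m$ simultaneously --- indeed, the paper's final equality (product of indicators equals the weighted sum) is itself justified only by the same identification of the discrete expectation with the sum over atoms. What the paper's detour through moments buys is the explicit formula that monomial moments vanish unless every exponent is even, which is the property actually exploited later when Bernoulli variables are used to decorrelate unwanted terms in the degree-5 and degree-7 Wiener-space constructions; your remark that the genuine content of cubature only appears for non-atomic measures is accurate and consistent with that later use. Both arguments are complete.
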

\begin{proof}
    Suppose $P\in\mathbb{R}_m[x_1,\cdots,x_d]$ is some monomial $P(x) = x_1^{k_1}\cdots x_d^{k_d}$. Let $\Lambda =(\Lambda_1,\cdots\Lambda_d)$ have a $d$-dimensional Bernoulli distribution. Then,
    \begin{align*}
        \int_{\mathbb{R}^d}P(x)\mu(\text{d}x)&=\mathbb{E}[\Lambda_1^{k_1}\cdots\Lambda_d^{k_d}]\\
        &=\mathds{1}_{k_1\text{ is even}}\cdots\mathds{1}_{k_d\text{ is even}}\\
        &=\sum_{i=1}^{2^d}\lambda_iP(x_i),
    \end{align*}
    as required.
\end{proof}
This particular construction tackles the problem using a na\"ive, ``brute force" approach and the size of the cubature formula, $2^d$, grows exponentially with dimension $d$. However, this cubature formula satisfies any degree $m$, making it an exact approximation. In many situations a cubature formula of a certain given degree may suffice. Naturally, in these instances, we can develop cubature formulae with far fewer points. We will refer to a cubature formula as \textit{efficient} if there exists some $k$ such that the size of the cubature formula, $n$, satisfies,
\[
    n\leq Cd^{k},
\]
for a fixed constant $C$ and all values $d$. The existence of efficient quadrature formulae is guaranteed by Tchakaloff's theorem \cite{tchakaloff1957formules}, which was later generalised to the case of cubature by Putinar \cite{PutinarMihai1997AnoT}.

\subsection{Wiener space}
Wiener space can be viewed as the probability space generated by a \textit{Brownian motion}, a random process with Gaussian increments. Formally, this is given by the following definitions.
\begin{definition}[Brownian motion]
    Let $B$ be a stochastic process on $(\Omega,\mathcal{F},\mathbb{P})$, with values in $\mathbb{R}^d$ such that,
    \begin{enumerate}[(I)]
        \item For any $0\leq t_0\leq \cdots\leq t_n$, the random variables,
        \[
        B_{t_0},B_{t_1}-B_{t_0},\cdots,B_{t_n}-B_{t_{n-1}},
        \]
        are independent;
        \item For any $0\leq s<t$, the random variable $B_t-B_s$ has multivariate normal distribution $\mathcal{N}(0,t-s)$;
        \item Almost all sample paths of $B$ are continuous.
    \end{enumerate}
    Then $B$ is a \textit{Brownian motion} in $\mathbb{R}^d$. Moreover, if $B_0=0$, then $B$ is a \textit{standard Brownian motion}.
\end{definition}
The existence and uniqueness theorems for Brownian motion are non-trivial and an explicit construction was first proposed by Wiener \cite{WienerNorbert1938THC}. In this dissertation, it is helpful to consider an augmented Brownian motion, $\widehat{B}_t$, a process in $\mathbb{R}^{d+1}$ where the extra dimension is time, $\widehat{B}_t^0:=t$.
\begin{definition}[Wiener measure]
    Let $B$ be a standard Brownian motion in $\mathbb{R}^{d}$. Then the law of process $B$ is the Wiener measure on $\mathbb{R}^{d}$.
\end{definition}
It is useful to remark that the Wiener measure defined in this way is a Gaussian measure.
\begin{definition}[Wiener space]
    Let $(\Omega, \mathcal{F}, \mathbb{P})$ be a probability space where,
    \begin{enumerate}
        \item $\Omega = C_0^0([0,T],\mathbb{R}^d)$, the space of $\mathbb{R}^d$-valued continuous functions defined in $[0,T]$ which start at zero;
        \item $\mathcal{F}$ is the associated Borel $\sigma$-algebra;
        \item $\mathbb{P}$ is the Wiener measure on $\mathbb{R}^{d}$.
    \end{enumerate}
    Then $(\Omega, \mathcal{F}, \mathbb{P})$ is the Wiener space in $\mathbb{R}^{d}$ .
\end{definition}
Heuristically, Wiener space is the distribution of the possible sample paths traced by a standard Brownian motion. It is typical to refer to paths in their time-augmented sense, and so the elements of Wiener space assume values in $\mathbb{R}^{d+1}$. Inherited from the properties of Brownian motion is that paths are continuous, but no further smoothness conditions are guaranteed, since Brownian motion is not differentiable.
\section{Cubature Formula on Wiener Space}\label{chap:cub-def}
Giving a definition to cubature formulae for Wiener space is not as simple as for positive measures on $\mathbb{R}^d$. Elements of Wiener space are sample paths, with some probabilistic distribution. To continue, we must enter a realm of stochastic analysis known as \textit{rough path theory}. In particular, we will develop the concept of the \textit{path-signature}, an enhancement to the regular notion of path that will be the driving motivation behind our definition of a cubature formula on Wiener space.

 Rough path theory relies upon a foundational knowledge of tensor algebras and lie algebras. The following section will give the relevant details from these respective areas, which we will subsequently utilise to define path-signatures and cubature formulae on Wiener space.

\subsection{Tensor algebra and the free Lie algebra}\label{sec:algebras}
\subsubsection{Alphabets, words and Lyndon words}
We begin with a totally ordered vector space $W$, often referred to as the \textit{alphabet}. We refer to tensors $w\in W^{\otimes k}$ as a \textit{word} of length $k$. We denote the set of all words by $W^*=\bigcup_{k=1}^\infty W^{\otimes k}$. $W^*$ inherits a natural ordering from the total order of the alphabet, known as \textit{lexicographical ordering}. We define this ordering by the following: suppose $w^{(1)}\in W^{\otimes k_1}, w^{(2)}\in W^{\otimes k_2}$ are two distinct words of length $k_1$ and $k_2$ respectively. Moreover, without loss of generality, let $k_1\leq k_2$. Then,
\begin{enumerate}[(I)]
    \item if there exists a word $w^{(3)}$ such that $w^{(1)}=w^{(2)}\otimes w^{(3)}$, then $w^{(1)}<w^{(2)}$;
    \item otherwise, write $w^{(1)} = w^{(1)}_1\otimes \cdots \otimes w^{(1)}_{k_1}$ and $w^{(2)}_1\otimes \cdots \otimes w^{(2)}_{k_2}$. We say $w^{(1)}<w^{(2)}$ if and only if $w_i^{(1)}<w_i^{(2)}$ where $i=\min\{j\leq k_1 : w_j^{(1)}\neq w_j^{(2)}\}$.
\end{enumerate}
The lexicographical ordering reflects the ordering of any standard English dictionary, hence the use of the terms \textit{word} and \textit{alphabet}.

\begin{definition}[Lyndon word]
    A word, $w=w_1\otimes\cdots\otimes w_k$, is a Lyndon word if it is lexicographically smaller than all cyclic rearrangements of itself. That is, $w<w^*$ for all $w^*\in\{w_{\sigma(1)}\otimes\cdots\otimes w_{\sigma(k)}\ :\ \sigma\in Z_k\}$, where $Z_k$ denotes the cyclic group of order $k$.
\end{definition}
This definition is attributed to Lyndon \cite{LyndonR.C.1954OBP}.

\subsubsection{Tensor algebra}
The tensor algebra over $W$ is the set of linear combinations of words in $W^*$,
\[
T(W) = \bigoplus_{k=0}^\infty W^{\otimes k}.
\]
The $m$-truncated tensor algebra over $W$ is given by,
\[
T^{(m)}(W) =  \bigoplus_{k=0}^m W^{\otimes k}.
\]
We define the projection, $\pi_m : T(W)\rightarrow T^{(m)}$(W), to be the map that retains only tensors of order less than or equal to $m$.

\subsubsection{Lie algebra and the free Lie algebra}
A lie bracket on $W$, is an alternating bilinear map, $[\cdot,\cdot]:W\times W\rightarrow W$, that satisfies the Jacobi identity,
\[
\big [u,[v,w]\big]+\big [v,[w,u]\big]+\big [w,[u,v]\big]=0,
\]
for all $u,v,w\in W$. We define the set of \textit{iterated Lie brackets}, $\mathcal{M}(W)$, (also called the set of \textit{Lie monomials}) by the following recursion,
\begin{enumerate}[(I)]
    \item $w\in\mathcal{M}(W)$, for all $w\in W$;
    \item If $p,q\in\mathcal{M}(W)$ then $[p,q]\in\mathcal{M}(W)$.
\end{enumerate}
For each iterated Lie bracket, we can associate a word by reading the entries in order from left to right. Many iterated Lie brackets have the same associated word. For instance,
\[
    \big [[u,v],[u,v]\big ]\quad\text{and}\quad \Big [\big[[u,v],u\big ],v\Big ],
\]
both have associated word $u\otimes v\otimes u\otimes v$. This allows us to instill $\mathcal{M}(W)$ with the following (weak) ordering: given $\ell_1,\ell_2\in\mathcal{M}(W)$, with associated words $w_1,w_2$ respectively, we say $\ell_1<\ell_2$ only if $w_1<w_2$.

This ordering is weak due to the lack of injectivity between Lie brackets and their associated words. 

\begin{definition}[Free Lie algebra]
    The space of linear combinations of iterated Lie brackets, $\mathcal{L}(W)$, is called the \textit{free Lie algebra} generated by $W$.
\end{definition}
Finite combinations of iterated lie brackets will be referred to as \textit{Lie polynomials}, while infinite combinations will be know as \textit{Lie series}. The weak ordering on $\mathcal{M}(W)$ extends to $\mathcal{L}(W)$ by linearity. 

For the purposes of this work, we will refer to the \textit{Lyndon basis} for the free lie algebra, an instance of a larger class of bases known as the \textit{Hall bases}. The Lyndon basis is defined using the Lyndon words as an indexing set for iterated Lie brackets, the full details of which are given by Reutenauer \cite{alma990107836220107026}.

Finally, we define a symmetrised product on the free Lie algebra.
\begin{definition}[Symmetrised product]
    Given lie polynomials $\ell_1,\cdots,\ell_n\in\mathcal{L}(W)$, their \textit{symmetrised product} is defined by,
    \[
    (\ell_1,\cdots,\ell_n):= \frac{1}{n!}\sum_{\sigma\in S_n}\ell_{\sigma(1)}\otimes\cdots\otimes\ell_{\sigma(n)},
    \]
    where $S_n$ denote the symmetric group of degree $n$.
\end{definition}
This definition motivates an important corollary of the \textit{Poincar\'e-Birkhoff-Witt theorem}, one that will give a basis for the tensor algebra in terms of Lie polyomials.
\begin{theorem}[Poincar\'e-Birkhoff-Witt theorem]\label{thm:PBW}
    Let $\mathcal{B}_\mathcal{L}$ denote a basis of the free lie algebra generated by $W$. Then the set,
    \[
    \mathcal{B}_{T(W)}:=\bigcup_{n\geq 0}\Big \{(\ell_1,\cdots,\ell_n)\quad :\quad\begin{array}{l}
    \ell_1,\cdots,\ell_n\in\mathcal{B}_\mathcal{L},\\
    \ell_1\leq \cdots\leq \ell_n
    \end{array}\Big \},
    \]
    forms a basis for the tensor algebra, $T(W)$.
\end{theorem}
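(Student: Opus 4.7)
The plan is to reduce the assertion to the ``classical'' Poincar\'e-Birkhoff-Witt theorem, which says that with the same indexing set the (non-symmetrised) ordered tensor monomials
\[
\mathcal{B}_{\mathrm{ord}} := \bigcup_{n\geq 0}\Big\{\ell_{i_1}\otimes\cdots\otimes\ell_{i_n}\ :\ \ell_{i_j}\in\mathcal{B}_\mathcal{L},\ \ell_{i_1}\leq\cdots\leq\ell_{i_n}\Big\}
\]
form a basis of $T(W)$ (see Reutenauer \cite{alma990107836220107026} for a proof). I would take this as the starting point, and pass to the symmetrised version by an invertible change of basis.

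The next ingredient is a filtration of $T(W)$ by the number of Lie-polynomial factors: let $F_n$ denote the span of those elements of $\mathcal{B}_{\mathrm{ord}}$ involving at most $n$ factors, so that $F_0\subset F_1\subset\cdots\subset T(W)$. The crux of the argument is the congruence
\[
\ell_{i_{\sigma(1)}}\otimes\cdots\otimes\ell_{i_{\sigma(n)}}\equiv\ell_{i_1}\otimes\cdots\otimes\ell_{i_n}\pmod{F_{n-1}},
\]
valid for any $\sigma\in S_n$ and any $\ell_{i_1}\leq\cdots\leq\ell_{i_n}$ in $\mathcal{B}_\mathcal{L}$. I would establish this by bubble-sorting $\sigma$ with the identity $a\otimes b=b\otimes a+[a,b]$, which makes sense inside $T(W)$ because the bracket on $\mathcal{L}(W)$ coincides with the tensor commutator. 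Each adjacent transposition produces a remainder of the form $\cdots\otimes[\ell_p,\ell_q]\otimes\cdots$; since $[\ell_p,\ell_q]\in\mathcal{L}(W)$ it expands in $\mathcal{B}_\mathcal{L}$, so the remainder is a sum of tensor products of $n-1$ basis Lie polynomials, and by the classical PBW it lies in $F_{n-1}$. A short induction on $n$ propagates the claim through arbitrarily many swaps.

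Averaging the congruence over $\sigma\in S_n$ and applying the definition of the symmetrised product gives
\[
(\ell_{i_1},\ldots,\ell_{i_n})\equiv\ell_{i_1}\otimes\cdots\otimes\ell_{i_n}\pmod{F_{n-1}}.
\]
If we now enumerate the ordered basis so that all monomials of length $n$ precede those of length $n+1$, the transition matrix from $\mathcal{B}_{\mathrm{ord}}$ to the proposed set $\mathcal{B}_{T(W)}$ becomes block upper-triangular with identity diagonal blocks, hence invertible. Therefore $\mathcal{B}_{T(W)}$ is also a linear basis of $T(W)$.

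The main obstacle I expect is the bookkeeping in the bubble-sort step: one must check that the commutator remainders genuinely reduce the \emph{factor count} (not merely the tensor-word length) and that re-expanding $[\ell_p,\ell_q]$ in $\mathcal{B}_\mathcal{L}$ does not secretly reintroduce $n$-factor terms. Both points are resolved by the observation that $[\ell_p,\ell_q]\in\mathcal{L}(W)\subset\mathrm{span}(\mathcal{B}_\mathcal{L})$, so the remainder is a linear combination of products of exactly $n-1$ basis Lie polynomials, which by classical PBW lies in $F_{n-1}$ irrespective of the order of its factors.
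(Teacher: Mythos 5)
The paper does not actually prove this statement: it is quoted as a classical result, with the underlying theory (Lyndon/Hall bases and the Poincar\'e--Birkhoff--Witt theorem) deferred to Reutenauer \cite{alma990107836220107026}. So there is no ``paper proof'' to match; what you have written is a genuine argument where the paper has a citation. Your route --- take the classical PBW basis of ordered tensor monomials $\ell_{i_1}\otimes\cdots\otimes\ell_{i_n}$ as known, filter $T(W)$ by the number of Lie factors, show via $a\otimes b=b\otimes a+[a,b]$ (with the inner induction on the number of factors, which you correctly flag) that each symmetrised product agrees with its ordered monomial modulo $F_{n-1}$, and conclude by a triangular change of basis --- is the standard deduction of the symmetrised-product form of PBW from the classical one, and it is correct. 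Your averaging step uses the factor $\frac{1}{n!}$ in the definition of the symmetrised product, which is legitimate since the coefficients are real (characteristic zero); it is worth saying this explicitly, as the symmetrised statement genuinely needs it while the classical PBW does not. One further small point to make precise: the congruence argument requires a \emph{total} order on $\mathcal{B}_\mathcal{L}$ (as the Lyndon basis indeed carries, via the lexicographic order on Lyndon words), whereas the ordering the paper defines on Lie monomials is only weak; you should state that the order used on the basis is total. With those two remarks added, your argument is a complete proof of the statement, and arguably a useful addition given that the paper leaves it uncited in detail.
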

When $\mathcal{B}_\mathcal{L}$ denotes the Lyndon basis, we will refer to the basis for $T(W)$ as the \textit{symmetrised Lyndon basis}.

The exponential of a Lie polynomial, $\ell$, is defined by,
\[
    \exp (\ell) := \sum_{k=0}^\infty \frac{1}{k!}\ell^{\otimes k}.
\]
A useful result is that if we express a Lie polynomial in the Lyndon basis, then the exponential of the Lie polynomial can be easily written in the symmetrised Lyndon basis.

\subsection{Paths, signatures and log-signatures}\label{sec:path-sigs}
The signature is an enhancement of a path that captures feature information, first introduced by Lyons \cite{alma991025219949807026}. It serves to equip a path with the relevant ``additional" information that encapsulates the roughness of the path in a way that ensures solution maps to differential equations are continuous.
\begin{definition}[Signature of a path]
    Let $X_t$ be a path in $\mathbb{R}^d$. For any multi-index $I=(i_1,\cdots,i_k)$, define the iterated integral associated with $w$ by,
    \[    X_{[s,t]}^{I}=\int_{s<t_1<\cdots<t_k<t}\text{d}X^{i_1}_{t_1}\otimes\cdots\otimes\text{d}X^{i_k}_{t_k}.
    \]
    The \textit{signature} of $X_t$ over the interval $[s,t]$, is defined by the collection of all such integrals, often written as an element of the tensor algebra over $\mathbb{R}^d$,
    \[
        S_{[s,t]}(X):= \sum_{I\in I^*}X_{[s,t]}^I\in T(\mathbb{R}^d),
    \]
    where $I^*$  denotes the set of all multi-indices.
\end{definition}
The path-signature is an infinite series, so in practical settings we often work with the the depth-$m$ truncated signature given by,
\[
    S^{(m)}_{[s,t]}(X):=\pi_m \big(S_{[s,t]}(X)\big)\in T^{(m)}(\mathbb{R}^d).
\]
The path-signature contains many redundancies and has a more efficient representation, called the \textit{log-signature}.
\begin{definition}[Log-signature]
    The \textit{log-signature} is defined using the logarithm of a formal series,
    \[
        \log\big(S_{[s,t]}(X)\big ):=\sum_{k=1}^\infty \frac{(-1)^k}{k}\big (1-S_{[s,t]}(X)\big )^{\otimes k}.
    \]
\end{definition}

\subsection{Cubature formula on Wiener space definitions}\label{sec:cub-def}
Recall that a cubature formula for a positive measure was defined using an equivalence relation on polynomials up to degree $m$, since polynomials can be used to approximate functions to within some accuracy that depends on $m$. The analogous tool for cubature formulae on Wiener space turns out to be path-signatures up to truncation $m$, an idea first defined by Lyons and Victoir \cite{LyonsTerry2004CoWs}. The more levels to which two path-signatures agree, the better the underlying paths are to ``approximating" one another. 

\subsubsection{Cubature in terms of paths}
The simplest way to define cubature is as a discrete set of weights and paths that weakly approximate the signature of Wiener space.
\begin{definition}[Cubature Formula on Wiener Space]
    A set of paths, $\omega_1,\cdots,\omega_n\in C_0^0([0,T],\mathbb{R}^d)$ and associated positive weights $\lambda_1,\cdots,\lambda_n$ define a degree $m$ cubature formula on Wiener space in $\mathbb{R}^d$ at time $T$ if,
    \[
        \mathbb{E}\big [S^{(m)}_{[0,T]}(\circ \widehat{B})\big ]=\sum_{k=1}^n\lambda_kS_{[0,T]}^{(m)}(\omega_k).
    \]
\end{definition}
The notation $\circ$ is used to indicate integration understood in the sense of Stratonovich \cite{alma990107465930107026}, a convention which will be maintained throughout.

\subsubsection{Cubature in terms of Lie polynomials}
Theorem \ref{thm:PBW} yielded an alternative basis for $T(\mathbb{R}^d)$ in terms of Lie polynomials. This is indicative of a mapping between the tensor alegbra and the free Lie alegbra, which can be applied to the path-signature. The details of this are captured in Chen's theorem \cite{ChenKuo-sai1958Iopf}.
\begin{theorem}[Chen's theorem]
    Let $X_t$ be a path in $\mathbb{R}^d$. Then the path-signature, $S_{[s,t]}(X)$, has the following multiplicative property,
    \[
    S_{[r,s]}(X)\otimes S_{[s,t]}(X) = S_{[r,t]}(X).
    \]
    Furthermore $\log\big(S_{[s,t]}(X)\big )$ is a Lie series. Conversely, suppose $\ell\in\mathcal{L}^{(m)}(\mathbb{R}^d)$ is a Lie polynomial. There exists a continuous path of bounded variation, $\omega$, such that,
    \[
    \log\big (S^{(m)}_{[s,t]}(\omega)\big )=\ell.
    \]
\end{theorem}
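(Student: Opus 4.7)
The plan is to verify the three claims in turn: Chen's multiplicative identity, the fact that $\log S_{[s,t]}(X)$ is Lie-valued, and finally the converse realisation result.

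\textbf{Chen's identity.} I would argue level by level. Fix a multi-index $I=(i_1,\dots,i_k)$. The claim at this level is
\[
X^I_{[r,t]}=\sum_{j=0}^{k}X^{(i_1,\dots,i_j)}_{[r,s]}\otimes X^{(i_{j+1},\dots,i_k)}_{[s,t]},
\]
with the convention that the empty multi-index contributes $1$; summing over $I$ and $k$ yields the full identity. To prove this, partition the simplex $\{r<t_1<\cdots<t_k<t\}$ according to the unique $j\in\{0,\dots,k\}$ with $t_j<s<t_{j+1}$ (taking $t_0:=r$, $t_{k+1}:=t$, and discarding measure-zero boundary sets). Fubini separates each piece into a product of iterated integrals over $[r,s]$ and $[s,t]$, which is exactly the right-hand side.

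\textbf{Log-signature is Lie-valued.} Here I would appeal to the Hopf-algebraic characterisation of Lie series. Repeated integration by parts yields the shuffle identity $X^I_{[s,t]}\,X^J_{[s,t]}=\sum_{K}X^K_{[s,t]}$, where $K$ ranges over the shuffles of $I$ and $J$. Equivalently, $S_{[s,t]}(X)$ is a group-like element for the shuffle coproduct on $T(\mathbb{R}^d)$. A classical theorem of Reutenauer then states that an element of the completed tensor algebra is group-like if and only if its logarithm is primitive, and that the primitive elements are precisely the Lie series; hence $\log S_{[s,t]}(X)$ lies in the completion of $\mathcal{L}(\mathbb{R}^d)$.

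\textbf{Converse direction, the main obstacle.} Given $\ell\in\mathcal{L}^{(m)}(\mathbb{R}^d)$, the target is a continuous, bounded-variation $\omega$ with $\log S^{(m)}_{[s,t]}(\omega)=\ell$. The natural setting is the step-$m$ free nilpotent Lie group $G^{(m)}:=\exp\bigl(\mathcal{L}^{(m)}(\mathbb{R}^d)\bigr)\subset T^{(m)}(\mathbb{R}^d)$. The truncated signature of a straight segment in direction $v\in\mathbb{R}^d$ traversed in unit time is $\exp(v)\in G^{(m)}$, and by the Chen identity just established, concatenation of paths corresponds to multiplication of truncated signatures. Thus the image of the signature map on piecewise linear paths contains the subgroup $H\subseteq G^{(m)}$ generated by $\{\exp(v):v\in\mathbb{R}^d\}$, and the problem reduces to proving $H=G^{(m)}$. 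This surjectivity is the delicate step: I would argue by induction on the degree in the free Lie algebra, using commutator identities of the form $\exp(tv)\exp(tw)\exp(-tv)\exp(-tw)=\exp\bigl(t^2[v,w]+O(t^3)\bigr)$ and their nested analogues, combined with the Baker--Campbell--Hausdorff formula, to realise $\exp(\lambda b)\in H$ for every Lyndon basis bracket $b$ and every $\lambda\in\mathbb{R}$. Since such exponentials generate $G^{(m)}$, we obtain $H=G^{(m)}$, and the concatenated piecewise linear path corresponding to $\exp(\ell)$ serves as the required $\omega$, which is continuous and of bounded variation by construction.
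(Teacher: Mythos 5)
The paper offers no proof of this theorem to compare against: it is stated as a known result with a citation to Chen (1958), and the converse realisation of Lie polynomials by bounded-variation paths is likewise imported from the literature (it is the standard ingredient in Lyons--Victoir). Judged on its own, your argument is the standard one and is essentially sound. The level-by-level simplex decomposition plus Fubini is exactly the classical proof of Chen's identity for bounded-variation paths (for the Stratonovich signature of Brownian motion, which is how the paper uses it, the same algebraic identity holds but needs either a stochastic Fubini/approximation remark, since your integrals are then not Lebesgue--Stieltjes). The chain ``shuffle identity $\Rightarrow$ group-like $\Rightarrow$ logarithm primitive $\Rightarrow$ Lie series'' is Friedrichs' criterion as in Reutenauer; one terminological slip: the signature is group-like for the deconcatenation (unshuffle) coproduct on the concatenation algebra, not for a ``shuffle coproduct'', though the shuffle relations on coefficients are indeed the equivalent dual statement. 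For the converse, reducing to surjectivity of the piecewise-linear signature onto the free step-$m$ nilpotent group $G^{(m)}$ and proving it by a Chow--Rashevskii-type commutator argument is exactly how this is done in the rough-path literature. The one place you compress too much is the passage from $\exp(tv)\exp(tw)\exp(-tv)\exp(-tw)=\exp(t^2[v,w]+O(t^3))$ to $\exp(\lambda b)\in H$: the $O(t^3)$ error must be removed by a descending induction on degree (top-degree errors are central and vanish beyond level $m$, so they can be cancelled by multiplying with exponentials already known to lie in $H$), and the closing claim that the elements $\exp(\lambda b)$ over basis brackets generate $G^{(m)}$ rests on coordinates of the second kind for simply connected nilpotent Lie groups. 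These are standard, fixable details rather than gaps in the approach, so your write-up supplies a proof where the paper gives only a reference.
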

This motivates an equivalent definition for cubature formulae in Wiener space, one that uses Lie polynomials instead of paths.
\begin{definition}[Cubature formula on Wiener space]\label{def:cub-lie-poly}
    A set of Lie polynomials, $\ell_1,\cdots,\ell_n\in\mathcal{L}^{(m)}(\mathbb{R}^{d+1})$ and associated positive weights $\lambda_1,\cdots,\lambda_n\in\mathbb{R}$ define a degree $m$ cubature formula on Wiener space in $\mathbb{R}^d$ if,
    \[
        \mathbb{E}\big [S_{[0,1]}^{(m)}(\circ B)\big ]=\sum_{k=1}^n\lambda_k\pi_m\big (\exp(\ell_k)\big ).
    \]
\end{definition}
It transpires that this alternative definition aids the explicit construction of cubature formulae, as will be seen in Chapter \ref{chap:construct}. To facilitate this is the following expansion, stated as Proposition 4.10 in Lyons and Victoir \cite{LyonsTerry2004CoWs}.
\[
    \mathbb{E}\big [S_{[0,1]}(\circ B)\big ]=\exp \Big (\varepsilon_0+\frac{1}{2}\sum_{k=1}^d\varepsilon_k^{\otimes 2}\Big ).
\]
Substituting this expression into Definition \ref{def:cub-lie-poly}, then a set of Lie polynomials and associated weights which satisfy,
\[
    \pi_m\Bigg (\exp\Big (\epsilon_0 +\frac{1}{2}\sum_{k=1}^d \epsilon_k^{\otimes 2}\Big )\Bigg ) = \sum_{k=1}^n\lambda_k\pi_m\big (\exp(\ell_k)\big ),
\]
is a cubature formula of degree $m$.
\section{Explicit Constructions of Cubature Formula on Wiener Space}\label{chap:construct}
The first explicit constructions for efficient cubature formula on Wiener space were published in Lyons and Victoir \cite{LyonsTerry2004CoWs}, which contains constructions for degree 3 and degree 5 cubature formula for Wiener space in general dimension. Various constructions for a degree 7 cubature formula have been proposed. Most notably, Litterer \cite{LittererThesis} proposed an efficient degree 7 cubature formula for Wiener space in dimension 2 and Gyurk\'o and Lyons \cite{alma991025225271007026} published a selection of constructions for low-dimensional cases and in various degrees. Nonetheless, an explicit degree 7 construction in general dimension has remained elusive.

In this section, we will review the degree 3 and degree 5 constructions laid out in Lyons and Victoir \cite{LyonsTerry2004CoWs}. These constructions motivate a general framework, which will then be applied to the degree 7 case. The section will conclude with an explicit construction of a degree 7 cubature formula for dimension 3 Wiener space, and a discussion on how this is sufficient to define a formula in general dimension.

Recall from the previous section that to obtain a degree $m$ cubature formula on Wiener space, it is sufficient to construct Lie polynomials $\ell_1,\cdots,\ell_n\in\mathcal{L}^{(m)}(\mathbb{R}^{d+1})$ and associated weights $\lambda_1,\cdots,\lambda_n$ which satisfy,
\begin{equation}\label{eq:cub}
    \pi_m\Bigg (\exp\Big (\epsilon_0 +\frac{1}{2}\sum_{k=1}^d \epsilon_k^{\otimes 2}\Big )\Bigg ) = \sum_{k=1}^n\lambda_k\pi_m\big (\exp(\ell_k)\big ).
\end{equation}

\subsection{Degree $m=3$}

\begin{theorem}
    Let $(z_k,\lambda_k)_{k=1}^n$ be a degree 3 cubature formula for the $d$-dimensional Gaussian measure. Define,
    \[
        \ell_k = \varepsilon_0+\sum_{i=1}^d z_k^i\varepsilon_i,
    \]
    for $k=1,\cdots,n$. Then $(\ell_k,\lambda_k)_{k=1}^n$ defines a degree 3 cubature formula on Wiener space.
\end{theorem}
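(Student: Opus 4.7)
The strategy is a direct verification of equation \eqref{eq:cub} at $m=3$ by expanding both sides as polynomials in $T^{(3)}(\mathbb{R}^{d+1})$ and matching coefficients using the Gaussian moment identities supplied by the hypothesis.

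First I would expand the left-hand side $\pi_3(\exp(\varepsilon_0 + \tfrac{1}{2}\sum_i \varepsilon_i^{\otimes 2}))$ by writing $A := \varepsilon_0 + \tfrac{1}{2}\sum_i \varepsilon_i^{\otimes 2}$ and computing $1 + A + A^{\otimes 2}/2 + A^{\otimes 3}/6$. Because $A$ contains only tensor-degree-$1$ and tensor-degree-$2$ summands, no higher exponential powers contribute to $\pi_3$, so the truncated expansion is finite and explicit.

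Second I would expand the right-hand side. Writing $u_k := \sum_i z_k^i \varepsilon_i$ so that $\ell_k = \varepsilon_0 + u_k$, the exponential $\exp(\ell_k) = \sum_n (\varepsilon_0 + u_k)^{\otimes n}/n!$ can be expanded up to tensor degree $3$ by the non-commutative binomial formula. The degree-$3$ Gaussian cubature hypothesis translates into the tensor identities
\[
\sum_k \lambda_k = 1, \qquad \sum_k \lambda_k u_k = 0, \qquad \sum_k \lambda_k u_k^{\otimes 2} = \sum_i \varepsilon_i^{\otimes 2}, \qquad \sum_k \lambda_k u_k^{\otimes 3} = 0,
\]
which collapse the $\lambda_k$-weighted sum of the expansion into a single explicit tensor of degree at most $3$.

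Finally I would match the two expansions coefficient by coefficient. The contributions at degrees $0$, $1$ and $2$ agree immediately from the first three moment identities. Degree $3$ is the delicate step: the vanishing cubic moment ensures the pure $u_k^{\otimes 3}$ contribution drops out, while the remaining cross-terms involving $\varepsilon_0$ and $u_k^{\otimes 2}$ must be collected to reproduce the degree-$3$ part of $\pi_3(\exp(A))$. The main obstacle is exactly this last bookkeeping step: several inequivalent tensors (with $\varepsilon_0$ placed in the first, middle or last slot relative to the $\varepsilon_i$'s) arise from the quadratic and cubic exponential terms on both sides, and getting the combinatorial factors from $1/2!$ and $1/3!$ to line up with the symmetry of the Gaussian moments is what carries the argument; once that accounting is checked, the theorem follows with no further input.
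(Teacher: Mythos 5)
Your overall strategy (expand both exponentials, weight by $\lambda_k$, and match coefficients using the Gaussian moment identities up to order $3$) is the same as the paper's, which does exactly this in the symmetrised Lyndon basis. But there is a genuine gap at precisely the step you single out as the ``delicate bookkeeping'': the cross-terms mixing $\varepsilon_0$ with $u_k^{\otimes 2}$ do \emph{not} match, and no amount of collecting combinatorial factors will make them match. Concretely, the coefficient of the word $\varepsilon_0\otimes\varepsilon_i\otimes\varepsilon_i$ on the left-hand side comes only from $\tfrac12 A^{\otimes 2}$ and equals $\tfrac12\cdot\tfrac12=\tfrac14$, whereas on the right-hand side it comes from $\tfrac{1}{6}(\varepsilon_0+u_k)^{\otimes 3}$ and the weighted sum gives $\tfrac16\mathbb{E}[Z_i^2]=\tfrac16$; moreover the middle-slot word $\varepsilon_i\otimes\varepsilon_0\otimes\varepsilon_i$ appears on the right with coefficient $\tfrac16$ but not at all on the left. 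So the identity you propose to verify, with $\pi_3$ read as truncation at plain tensor level $3$ and $\varepsilon_0$ counted as a degree-one letter (as in your remark that $A$ has only tensor-degree-$1$ and $2$ summands), is simply false.

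The missing idea is the grading convention underlying Wiener-space cubature: because of Brownian scaling, the time letter $\varepsilon_0$ carries degree $2$, so a word's degree is its length plus the number of occurrences of the letter $0$. Under this scaled degree, the words $\varepsilon_0\otimes\varepsilon_i\otimes\varepsilon_i$, $\varepsilon_i\otimes\varepsilon_0\otimes\varepsilon_i$, $\varepsilon_i\otimes\varepsilon_i\otimes\varepsilon_0$ have degree $4$ and are outside the degree-$3$ matching requirement (they are only handled at degree $5$, where the construction acquires the extra bracket terms $[[\varepsilon_0,\varepsilon_i],\varepsilon_i]$). Once you truncate by scaled degree, the only words to check are $\emptyset$, $\varepsilon_i$, $\varepsilon_0$, $\varepsilon_i\otimes\varepsilon_j$, $\varepsilon_0\otimes\varepsilon_i$, $\varepsilon_i\otimes\varepsilon_0$ and $\varepsilon_i\otimes\varepsilon_j\otimes\varepsilon_k$, and these are matched immediately by $\sum_k\lambda_k=1$, $\mathbb{E}[Z_i]=0$, $\mathbb{E}[Z_iZ_j]=\delta_{ij}$ and $\mathbb{E}[Z_iZ_jZ_k]=0$ --- which is exactly the paper's proof, carried out there in the symmetrised Lyndon basis rather than the raw word basis. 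With the degree convention corrected, your argument goes through and the ``obstacle'' you anticipated disappears.
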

\begin{proof}
Expressed in the symmetrised Lyndon basis, the left-hand side of (\ref{eq:cub}) becomes,
\[
    \pi_3\Bigg (\exp\Big (\epsilon_0 +\frac{1}{2}\sum_{i=1}^d \epsilon_i^{\otimes 2}\Big )\Bigg ) = 1+\varepsilon_0+\frac{1}{2}\sum_{i=1}^d (\varepsilon_i,\varepsilon_i)
\]
and we have the following expansion,
\[
    \pi_3\exp(\ell_k) = 1+\varepsilon_0+\sum_{i=1}^d z_k^i\varepsilon_k + \frac{1}{2}\sum_{i=1}^d (z_k^i)^2(\varepsilon_i,\varepsilon_i)+\sum_{1\leq i<j\leq d} z_k^iz_k^j(\varepsilon_i,\varepsilon_j).
\]
Let $Z=(Z_1,\cdots,Z_d)$ be a $d$-dimensional Gaussian variable. Using the moments of the Gaussian,
\begin{align*}
    &\sum_{k=1}^n \lambda_kz_k^i=\mathbb{E}[Z_i]=0\\
    &\sum_{k=1}^n \lambda_kz_k^iz_k^j=\mathbb{E}[Z_iZ_j]=0\\
    &\sum_{k=1}^n \lambda_k(z_k^i)^2=\mathbb{E}[Z_i^2]=1
\end{align*}
for all $i<j$. Thus,
\[
    \sum_{k=1}^n\lambda_k\pi_m\exp(\ell_k) = 1+\varepsilon_0+\frac{1}{2}\sum_{i=1}^d (\varepsilon_i,\varepsilon_i),
\]
as required.
\end{proof}
Given the prevalence of the (multivariate) Gaussian measure in probability, statistics and other fields, there is a history of literature that presents efficient cubature formulae for the Gaussian measure. The encyclopaedic work presented by Stroud \cite{alma990129986400107026} contains many such cubature formulae. Wiener space is intrinsically connected to the Gaussian measure, so it is no coincidence that we can utilise it. If every instance of a canonical basis variable, $\varepsilon_i$, appears with a coefficient of $z_i$, then the moments of the Gaussian distribution ensure that whatever basis terms remain after the right-hand side expansion of (\ref{eq:cub}) only contains even occurrences of $\varepsilon_i$. This simple trick will be employed again to construct a degree 5 cubature formula.

The size of the cubature formula is given by,
\[
\mathcal{S}_d(3)=N_d(3)
\]
where $N_d(3)$ is the size of the $d$-dimensional Gaussian cubature formula of degree 3.

\subsection{Degree $m=5$}
\begin{theorem}
    Let $(z_k,\lambda_k)_{k=1}^n$ be a degree 5 cubature formula for the $d$-dimensional Gaussian measure. Define,
    \begin{align}
        \ell_{k,\eta} &= \varepsilon_0+\sum_{i=1}^d z_k^i\varepsilon_i+\frac{1}{12}\sum_{i=1}^d (z_k^i)^2\big [[\varepsilon_0,\varepsilon_i],\varepsilon_i\big ]+\frac{\eta}{2}\sum_{1\leq i<j\leq d}z_k^iz_k^j[\varepsilon_i,\varepsilon_j]\\
        &+\frac{1}{6}\sum_{1\leq i<j\leq d}\bigg ( xz_k^i(z_k^j)^2\big[[\varepsilon_i,\varepsilon_j],\varepsilon_j\big ]-(1-x) (z_k^i)^2z_k^j\big[[\varepsilon_i,\varepsilon_j],\varepsilon_i\big ]\bigg )
    \end{align}
    for $k=1,\cdots,n$ and $\eta\in \{1,-1\}$. Further, let $\lambda_{k,\eta}=\frac{\lambda_k}{2}$. Then $(\ell_{k,\eta},\lambda_{k,\eta})_{k,\eta}$ defines a degree 5 cubature formula on Wiener space.
\end{theorem}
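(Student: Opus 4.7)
The plan is to follow the same paradigm as the degree 3 case: expand both sides of (\ref{eq:cub}) in the symmetrised Lyndon basis of $T^{(5)}(\mathbb{R}^{d+1})$ guaranteed by Theorem \ref{thm:PBW}, then match coefficients basis element by basis element, with the degree 5 moments of the Gaussian supplying the required algebraic identities through the cubature hypothesis on $(z_k,\lambda_k)_{k=1}^n$.

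First I would compute the left-hand side,
\[
    \pi_5\Bigg(\exp\Big(\varepsilon_0+\tfrac{1}{2}\sum_{i=1}^d\varepsilon_i^{\otimes 2}\Big)\Bigg),
\]
by expanding the exponential as a formal series and truncating. Since $\varepsilon_0$ and the $\frac{1}{2}(\varepsilon_i,\varepsilon_i)$ are already elements of the symmetrised Lyndon basis (no genuine Lie brackets are involved), the result is a linear combination of symmetrised products of $\varepsilon_0$'s and the $(\varepsilon_i,\varepsilon_i)$, organised by the number of $\varepsilon_0$ factors. Crucially, no basis element containing a nontrivial Lie bracket such as $[\varepsilon_i,\varepsilon_j]$ appears on this side.

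Next I would expand $\pi_5\exp(\ell_{k,\eta})$. Writing $\ell_{k,\eta} = \alpha_k + \rho_{k,\eta}$ with $\alpha_k := \varepsilon_0 + \sum_{i=1}^d z_k^i\varepsilon_i$ the linear part and $\rho_{k,\eta}$ collecting the Lie-bracket corrections, I observe that every bracket in $\rho_{k,\eta}$ has tensor degree $\geq 2$, so at truncation $5$ the only contributions come from powers $\alpha_k^{\otimes m}/m!$ for $m\leq 5$ together with first-order interactions between powers of $\alpha_k$ and a single factor from $\rho_{k,\eta}$. I would then re-express each resulting tensor in the symmetrised Lyndon basis via the PBW procedure. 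The final step is to weight by $\lambda_{k,\eta}=\lambda_k/2$ and sum over $\eta\in\{-1,1\}$ and $k$. Averaging over $\eta$ annihilates every term containing an odd number of factors from the $\frac{\eta}{2}\sum_{i<j} z_k^iz_k^j[\varepsilon_i,\varepsilon_j]$ piece, which is precisely the antisymmetric L\'evy-area contribution that has no analogue on the left. For the surviving terms, the sum $\sum_k\lambda_k$ against a monomial in $(z_k^i)$ of degree $\leq 5$ reduces by hypothesis to the corresponding moment $\mathbb{E}[Z_{i_1}\cdots Z_{i_r}]$ of the $d$-dimensional Gaussian, and matching these level by level against the left-hand side completes the proof.

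The main obstacle will be careful accounting of the degree 4 and degree 5 symmetrised Lyndon basis elements. The coefficients $\tfrac{1}{12}$ on $\bigl[[\varepsilon_0,\varepsilon_i],\varepsilon_i\bigr]$ and $\tfrac{1}{6}$ on the triple brackets $\bigl[[\varepsilon_i,\varepsilon_j],\varepsilon_j\bigr]$ and $\bigl[[\varepsilon_i,\varepsilon_j],\varepsilon_i\bigr]$ must be chosen precisely so that, after exponentiation and averaging, every genuine Lie-bracket basis element cancels (or is traded for a symmetric element matching the left-hand side), while the pure symmetric products assemble into the desired expansion. The role of the parameter $x$ also needs to be examined: it should either cancel identically or combine with an odd-order Gaussian moment identity such as $\sum_k\lambda_k z_k^i(z_k^j)^2 = \mathbb{E}[Z_i Z_j^2] = 0$ so that the cubature identity holds independently of the choice of $x$, reflecting a genuine one-parameter family of valid formulae.
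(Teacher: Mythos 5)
Your overall strategy -- expand both sides of (\ref{eq:cub}) at truncation $5$ in the symmetrised Lyndon basis, average over $\eta$, and reduce the weighted sums over $k$ to Gaussian moments -- is exactly the paper's, whose proof records the resulting coefficients in a table. But your execution plan rests on a false premise: the claim that the left-hand side contains no basis element with a nontrivial Lie bracket. The blocks $\varepsilon_0$ and $\tfrac12\varepsilon_i^{\otimes2}$ do not commute, and re-expressing their products in the PBW basis generates bracket terms. Concretely, $\tfrac12\big(\varepsilon_0\otimes\varepsilon_i^{\otimes2}+\varepsilon_i^{\otimes2}\otimes\varepsilon_0\big)=(\varepsilon_0,\varepsilon_i,\varepsilon_i)+\tfrac16\big[[\varepsilon_0,\varepsilon_i],\varepsilon_i\big]$, and $\tfrac12\big(\varepsilon_i^{\otimes2}\otimes\varepsilon_j^{\otimes2}+\varepsilon_j^{\otimes2}\otimes\varepsilon_i^{\otimes2}\big)=(\varepsilon_i,\varepsilon_i,\varepsilon_j,\varepsilon_j)+\tfrac12([\varepsilon_i,\varepsilon_j],[\varepsilon_i,\varepsilon_j])+\tfrac13\big(\big[[\varepsilon_i,\varepsilon_j],\varepsilon_j\big],\varepsilon_i\big)+\tfrac13\big(\big[\varepsilon_i,[\varepsilon_i,\varepsilon_j]\big],\varepsilon_j\big)$, so the left-hand side carries the nonzero coefficients $\tfrac1{12}$, $\tfrac18$, $\tfrac1{12}$, $\tfrac1{12}$ on these bracketed elements, exactly as in the paper's table. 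The bracket corrections in $\ell_{k,\eta}$ are therefore not chosen so that ``every genuine Lie-bracket basis element cancels''; they are chosen to reproduce these nonzero targets. Relatedly, your restriction to ``a single factor from $\rho_{k,\eta}$'' omits the term $\tfrac12\ell_2^{\otimes2}$ coming from the L\'evy-area piece $\tfrac{\eta}2\sum_{i<j}z_k^iz_k^j[\varepsilon_i,\varepsilon_j]$; since $\eta^2=1$ this survives the $\eta$-average and is precisely what supplies the coefficient $\tfrac18$ of $([\varepsilon_i,\varepsilon_j],[\varepsilon_i,\varepsilon_j])$ needed to match the left-hand side. Carried out as written, your accounting would lead you to expect these terms to vanish and the verification would fail.

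Your proposed resolution of the parameter $x$ is also not how the computation goes. The $x$-terms do not disappear through odd moments: pairing $\tfrac{x}6 z_k^i(z_k^j)^2\big[[\varepsilon_i,\varepsilon_j],\varepsilon_j\big]$ with the linear term $z_k^i\varepsilon_i$, and $-\tfrac{1-x}6(z_k^i)^2z_k^j\big[[\varepsilon_i,\varepsilon_j],\varepsilon_i\big]$ with $z_k^j\varepsilon_j$, yields coefficients $\tfrac{x}6\,\mathbb{E}[Z_i^2Z_j^2]=\tfrac{x}6$ and $\tfrac{1-x}6$ on $\big(\big[[\varepsilon_i,\varepsilon_j],\varepsilon_j\big],\varepsilon_i\big)$ and $\big(\big[\varepsilon_i,[\varepsilon_i,\varepsilon_j]\big],\varepsilon_j\big)$ respectively, which must each equal the left-hand value $\tfrac1{12}$. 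The coefficient matching therefore pins $x=\tfrac12$ (consistently with the $\tfrac1{12}$ entries in the paper's table), rather than holding for an arbitrary $x$; the identity is not $x$-independent in the way you anticipate.
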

\begin{proof}
Let $Z=(Z_1,\cdots,Z_d)$ be a $d$-dimensional Gaussian variable and let $\Lambda$ be a Bernoulli variable. The following table contains the left-hand side and right-hand side expansions of (\ref{eq:cub}) as coefficients of the symmetrised Lyndon basis. For any $1\leq i<j\leq d$,
\begin{center}
\bgroup
\def\arraystretch{1.5}
\begin{tabular}{|c|c|c|} 
     \hline
     Basis Element & LHS & RHS\\
     \hline
     $\varepsilon_0$ & $\frac{1}{2}$ & $\frac{1}{2}$\\
     $(\varepsilon_0,\varepsilon_0)$ & $\frac{1}{6}$ & $\frac{1}{6}$\\
     $(\varepsilon_i,\varepsilon_i)$ & $\frac{1}{2}$ & $\frac{1}{2}\mathbb{E}[Z_i^2]$\\
     $(\varepsilon_0,\varepsilon_i,\varepsilon_i)$ & $\frac{1}{2}$ & $\frac{1}{2}\mathbb{E}[Z_i^2]$\\
     $\big [[\varepsilon_0,\varepsilon_i],\varepsilon_i\big ]$ & $\frac{1}{12}$ & $\frac{1}{12}$\\[1ex]
     \hline
\end{tabular}
\quad\quad
\begin{tabular}{|c|c|c|}
     \hline
     Basis Element & LHS & RHS\\
     \hline
     $(\varepsilon_i,\varepsilon_i,\varepsilon_i,\varepsilon_i)$ & $\frac{1}{8}$ & $\frac{1}{24}\mathbb{E}[Z_i^4]$\\
     $(\varepsilon_i,\varepsilon_i,\varepsilon_j,\varepsilon_j)$ & $\frac{1}{4}$ & $\frac{1}{4}\mathbb{E}[Z_i^2Z_j^2]$\\
     $([\varepsilon_i,\varepsilon_j],[\varepsilon_i,\varepsilon_j])$ & $\frac{1}{8}$ & $\frac{1}{8}\mathbb{E}[\Lambda^2Z_i^2Z_j^2]$\\
     $(\big [[\varepsilon_i,\varepsilon_j],\varepsilon_j\big ],\varepsilon_i)$ & $\frac{1}{12}$ & $\frac{1}{12}\mathbb{E}[Z_i^2Z_j^2]$\\
     $(\big [\varepsilon_i,[\varepsilon_i,\varepsilon_j]\big ],\varepsilon_j)$ & $\frac{1}{12}$ & $\frac{1}{12}\mathbb{E}[Z_i^2Z_j^2]$\\[1ex]
     \hline
\end{tabular}
\egroup
\end{center}
All other basis terms have coefficient zero, due to either the Gaussian or Bernoulli moments. Each right-hand side coefficient matches the corresponding left-hand side coefficient, so (\ref{eq:cub}) is satisfied.
\end{proof}
This construction required the use of a cubature formula for both the Gaussian measure and for the Bernoulli measure. The Gaussian measure alone, whilst it removes a significant portion of undesirable terms, does not fulfil the task entirely. To remove excess terms in the right-hand side expansion is to decorrelate their constituent factors. This is easily achieved by introducing a Bernoulli variable.

The size of the cubature formula is given by,
\[
\mathcal{S}_d(5)=2N_d(5),
\]
where $N_d(5)$ is the size of the $d$-dimensional Gaussian cubature formula of degree 5.

\subsection{Degree $m=7$}
Using a degree 7 cubature formula for the Gaussian measure, combined with a number of Bernoulli variables allows us to present a degree 7 cubature formula for dimension 3 Wiener space.

\begin{theorem}
Let $(z_k,\lambda_k)_{k=1}^{n_1}$ be a degree 7 cubature formula for the $3$-dimensional Gaussian measure. Let $(\eta_k,\mu_k)_{k=1}^{n_2}$ be a degree 5 cubature formula for the four dimensional Bernoulli measure. Define,
\begin{align*}
    \ell_{r,s} &= \varepsilon_0+\sum_{i=1}^d z_r^i\varepsilon_i+\frac{1}{6}\eta_s^1\eta_s^2\Big (z_r^1\big [[\varepsilon_1,\varepsilon_2],\varepsilon_2\big]+z_r^2\big [[\varepsilon_2,\varepsilon_3],\varepsilon_3\big]+z_r^3\big [[\varepsilon_3,\varepsilon_1],\varepsilon_1\big]\Big )\\
    &+\frac{1}{6}\eta_s^1\eta_s^3\Big (z_r^1\big [[\varepsilon_1,\varepsilon_3],\varepsilon_3\big]+z_r^2\big [[\varepsilon_2,\varepsilon_1],\varepsilon_1\big]+z_r^3\big [[\varepsilon_3,\varepsilon_2],\varepsilon_2\big]\Big )\\
    &+\frac{1}{6}\eta_s^2\eta_s^3\Big (\big [[\varepsilon_1,\varepsilon_2],\varepsilon_3\big]+\big [[\varepsilon_2,\varepsilon_3],\varepsilon_1\big]+\big [[\varepsilon_3,\varepsilon_1],\varepsilon_2\big]\Big )+\frac{1}{12}\sum_{i=1}^d\big [[\varepsilon_0,\varepsilon_i],\varepsilon_i\big]\\
    &+\frac{1}{2\sqrt{3}}\Big (\eta_s^1\eta_s^2\eta_s^0-\eta_s^2z_r^1+\eta_s^1z_r^2\Big )[\varepsilon_1,\varepsilon_2]+\frac{1}{2\sqrt{3}}\Big (\eta_s^1\eta_s^3\eta_s^0+\eta_s^3z_r^1+\eta_s^1z_r^3\Big )[\varepsilon_1,\varepsilon_3]\\
    &+\frac{1}{2\sqrt{3}}\Big (\eta_s^2\eta_s^3\eta_s^0+\eta_s^3z_r^2+\eta_s^2z_r^3\Big )[\varepsilon_2,\varepsilon_3]+\frac{1}{2\sqrt{3}}\Big (\eta_s^3[\varepsilon_0,\varepsilon_3]-\eta_s^2[\varepsilon_0,\varepsilon_2]-\eta_s^1[\varepsilon_0,\varepsilon_1]\Big )\\
    &+\frac{1}{24\sqrt{3}}\eta_s^0\sum_{\substack{1\leq i,j,k\leq d\\i<j\\i\neq k\neq j}} \eta_s^i\eta_s^j\Big (\Big [\big [[\varepsilon_k,\varepsilon_j],\varepsilon_k\big],\varepsilon_i\Big]+\Big [\big [[\varepsilon_i,\varepsilon_j],\varepsilon_k\big],\varepsilon_k\Big]+\Big [\big [[\varepsilon_i,\varepsilon_k],\varepsilon_k\big],\varepsilon_j\Big]\Big )\\
    &+\frac{1}{12\sqrt{3}}\eta_s^0\sum_{\substack{1\leq i,j,k\leq d\\i<j}} \eta_s^i\eta_s^j\Big (\Big [\big [[\varepsilon_i,\varepsilon_j],\varepsilon_j\big],\varepsilon_j\Big]+\Big [\big [[\varepsilon_i,\varepsilon_j],\varepsilon_i\big],\varepsilon_i\Big]\Big )+\frac{1}{12}\sum_{\substack{i,j\leq d \\ i\neq j}}\big [[\varepsilon_i,\varepsilon_j],\varepsilon_j\big]
\end{align*}
\begin{align*}
    &+\frac{1}{360}\sum_{i<j<k}\Big (z_r^i\bigg [\Big [\big [[\varepsilon_j,\varepsilon_k],\varepsilon_k\big],\varepsilon_j\Big],\varepsilon_i\bigg ]+z_r^k\bigg [\Big [\big [[\varepsilon_i,\varepsilon_j],\varepsilon_j\big],\varepsilon_k\Big],\varepsilon_i\bigg ]+z_r^i\bigg [\Big [\big [[\varepsilon_k,\varepsilon_i],\varepsilon_j\big],\varepsilon_j\Big],\varepsilon_k\bigg ]\\
    &+z_r^k\bigg [\Big [\big [[\varepsilon_j,\varepsilon_i],\varepsilon_i\big],\varepsilon_k\Big],\varepsilon_j\bigg ]+z_r^j\bigg [\Big [\big [[\varepsilon_k,\varepsilon_j],\varepsilon_i\big],\varepsilon_i\Big],\varepsilon_k\bigg ]+z_r^j\bigg [\Big [\big [[\varepsilon_i,\varepsilon_k],\varepsilon_k\big],\varepsilon_i\Big],\varepsilon_j\bigg ]\Big )\\
    &+\frac{1}{180}\sum_{i<j<k}\Big (z_r^j\bigg [\Big [\big [[\varepsilon_k,\varepsilon_i],\varepsilon_i\big],\varepsilon_j\Big],\varepsilon_k\bigg ]+z_r^i\bigg [\Big [\big [[\varepsilon_k,\varepsilon_j],\varepsilon_j\big],\varepsilon_i\Big],\varepsilon_k\bigg ]+z_r^k\bigg [\Big [\big [[\varepsilon_j,\varepsilon_k],\varepsilon_i\big],\varepsilon_i\Big],\varepsilon_j\bigg ]\\
    &+z_r^k\bigg [\Big [\big [[\varepsilon_i,\varepsilon_k],\varepsilon_j\big],\varepsilon_j\Big],\varepsilon_i\bigg ]\Big )+\frac{1}{120}\sum_{i<j<k}\Big (z_r^i\bigg [\Big [\big [[\varepsilon_j,\varepsilon_i],\varepsilon_k\big],\varepsilon_k\Big],\varepsilon_j\bigg ]+z_r^j\bigg [\Big [\big [[\varepsilon_i,\varepsilon_j],\varepsilon_k\big],\varepsilon_k\Big],\varepsilon_i\bigg ]\Big )\\
    &+\frac{1}{120}\sum_{\substack{1\leq i,j,k\leq d\\i\neq j\neq k\\
    i\neq k}}z_r^i\bigg [\Big [\big [[\varepsilon_i,\varepsilon_k],\varepsilon_k\big],\varepsilon_j\Big],\varepsilon_j\bigg ]+\frac{1}{360}\sum_{\substack{1\leq i,j\leq d\\i\neq j}}z_r^i\bigg [\Big [\big [[\varepsilon_i,\varepsilon_j],\varepsilon_j\big],\varepsilon_j\Big],\varepsilon_j\bigg ]\\
    &+\frac{1}{120}\sum_{\substack{1\leq i,j\leq d\\i\neq j}}z_r^j\bigg [\Big [\big [[\varepsilon_i,\varepsilon_j],\varepsilon_j\big],\varepsilon_j\Big],\varepsilon_i\bigg ]+\frac{1}{90}\sum_{\substack{1\leq i,j\leq d\\i\neq j}}z_r^i\bigg [\Big [\big [[\varepsilon_i,\varepsilon_j],\varepsilon_i\big],\varepsilon_j\Big],\varepsilon_i\bigg ]\\
    &+\frac{1}{360}\sum_{i=1}^d\bigg [\Big [\big [[\varepsilon_0,\varepsilon_i],\varepsilon_i\big],\varepsilon_i\Big],\varepsilon_i\bigg ]+\frac{1}{120}\sum_{\substack{1\leq i,j\leq d\\i\neq j}}\bigg [\Big [\big [[\varepsilon_0,\varepsilon_i],\varepsilon_i\big],\varepsilon_j\Big],\varepsilon_j\bigg ]\\
    &+\frac{1}{180}\sum_{\substack{1\leq i,j\leq d\\i\neq j}}\bigg [\Big [\big [[\varepsilon_j,\varepsilon_0],\varepsilon_i\big],\varepsilon_i\Big],\varepsilon_j\bigg ]+\frac{1}{360}\sum_{\substack{1\leq i,j\leq d\\i\neq j}}\bigg [\Big [\big [[\varepsilon_i,\varepsilon_j],\varepsilon_j\big],\varepsilon_0\Big],\varepsilon_i\bigg ]
\end{align*}
Further define $\lambda_{r,s}=\lambda_{r}\mu_{s}$. Then $\big \{\big (\ell_{r,s}, \lambda_{r,s}\big )\ :\ r\leq n_1,\ s\leq n_2\big \}$ defines a degree 7 cubature formula on Wiener space in $\mathbb{R}^3$.
\end{theorem}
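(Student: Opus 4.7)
The strategy mirrors the degree 5 proof: expand both sides of (\ref{eq:cub}) in the symmetrised Lyndon basis of $T^{(7)}(\mathbb{R}^{d+1})$ and verify that the coefficient on every basis element agrees. For the left-hand side, I would expand $\pi_7\exp(\varepsilon_0+\tfrac{1}{2}\sum_{k=1}^d\varepsilon_k^{\otimes 2})$ as the truncated exponential series and rewrite each tensor in the symmetrised Lyndon basis via Theorem \ref{thm:PBW}. Because the exponent is built from $\varepsilon_0$ and the $\varepsilon_k^{\otimes 2}=(\varepsilon_k,\varepsilon_k)$, only symmetric combinations of these ingredients appear, and the LHS reduces to a short explicit list of basis elements with known rational coefficients (generalising the LHS columns of the degree-5 tables).

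For the right-hand side, I would compute $\pi_7\exp(\ell_{r,s})$ by expanding $\sum_{k=0}^{7}\frac{1}{k!}\ell_{r,s}^{\otimes k}$ term by term. Each summand factors as a tensor product of Lie brackets from $\ell_{r,s}$ multiplied by a scalar monomial in the Gaussian variables $z_r^i$ and the Bernoulli variables $\eta_s^j$. Taking $\sum_{r,s}\lambda_r\mu_s$ replaces each scalar by a joint moment $\mathbb{E}[Z^{\alpha}]\,\mathbb{E}[\eta^{\beta}]$, which is exact for $|\alpha|\leq 7$ and $|\beta|\leq 5$ by the assumed cubature degrees. The Bernoulli moments instantly kill every monomial in which some $\eta_s^j$ occurs with odd multiplicity, and the Gaussian moments kill every monomial with an odd power of any $z_r^i$; this eliminates the vast majority of potential contributions. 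What remains is then expanded in the symmetrised Lyndon basis via Theorem \ref{thm:PBW} and compared coefficient by coefficient with the LHS.

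The design of $\ell_{r,s}$ is the core of the argument: the $\eta$-weightings attached to the various brackets are engineered so that the surviving cross terms from $\ell_{r,s}^{\otimes k}$ (for $k\geq 2$) land exactly on the symmetrised Lyndon basis elements present on the left, with the correct coefficients, and vanish identically on all others. The main obstacle is combinatorial bookkeeping: the degree-7 symmetrised Lyndon basis over $\mathbb{R}^{4}$ is substantially larger than its degree-5 counterpart, and for each basis element one must enumerate every pair (term of $\ell_{r,s}$, value of $k$) that can generate it and sum the resulting contributions. The specific rational and irrational coefficients in the statement, such as $\tfrac{1}{360}$, $\tfrac{1}{180}$, and $\tfrac{1}{2\sqrt{3}}$, are precisely what makes the cross-term cancellations close; verifying each one is mechanical but lengthy, and is most cleanly presented by extending the degree-5 table into a family of tables organised by basis element, which is where the genuine calculational work lies.
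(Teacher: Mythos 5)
Your overall strategy --- expand both sides of (\ref{eq:cub}) at level $7$ in the symmetrised Lyndon basis, use exactness of the Gaussian and Bernoulli cubature formulas to replace scalar monomials by moments, and match coefficients --- is the same strategy the paper follows; the difference is execution. The paper does not attempt the hand tables you describe: it records the left-hand side expansion in Appendix \ref{app:expansion} (quoting Litterer, plus an extra lemma rewriting the $(\varepsilon_i^{\otimes 2},\varepsilon_j^{\otimes 2},\varepsilon_k^{\otimes 2})$ terms) and verifies the identity with the Python code of Appendix \ref{app:code}, i.e.\ the proof is an explicit machine check rather than a table of moments. Your description of the left-hand side as ``a short explicit list'' is also optimistic: the paper calls the number of surviving basis terms ``inconveniently large'', which is exactly why it resorts to code, and the genuine content of the theorem --- that the specific constants $\frac{1}{360},\frac{1}{180},\frac{1}{120},\frac{1}{90},\frac{1}{2\sqrt{3}},\dots$ make every cross term close --- is left uncarried-out in your plan.

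One step of your argument is also stated too strongly and would fail as written. You claim that averaging against $(\eta_s,\mu_s)$ replaces each $\eta$-monomial by its Bernoulli moment because the formula is exact for $|\beta|\leq 5$, so odd multiplicities vanish. But in the level-$6$ and level-$7$ cross terms the $\eta$-monomials can have total degree well above $5$: for instance the product of the three level-two brackets $[\varepsilon_1,\varepsilon_2]\otimes[\varepsilon_1,\varepsilon_3]\otimes[\varepsilon_2,\varepsilon_3]$ contributes, among other terms, the monomial $(\eta_s^0)^3(\eta_s^1)^2(\eta_s^2)^2(\eta_s^3)^2$, of degree $9$. A degree-$5$ cubature formula for the Bernoulli measure is not guaranteed to integrate such a monomial correctly unless its points actually lie in $\{-1,1\}^4$, in which case every monomial collapses to a multilinear one of degree at most $4$ and your parity argument goes through. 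You must either add that hypothesis or restructure the bookkeeping so that only moments of degree at most $5$ are invoked; the paper's verification sidesteps the issue because its code uses the full $16$-point formula with weights $2^{-4}$, for which all moments are exact. With that repair, the remainder of your plan coincides with the coefficient-matching computation the paper delegates to its code.
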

\begin{proof}
    The number of basis terms that have degree up to 7 is inconveniently large. The left-hand side expansion of (\ref{eq:cub}) for $m=7$ can be found in Appendix \ref{app:expansion}. The construction is most easily verified using the Python code that computes the right-hand side expansion found in Appendix \ref{app:code}.
\end{proof}

The size of the cubature formula is given by,
\[
\mathcal{S}_3(7)=N_3(7)B_4(5),
\]
where $N_3(7)$ and $B_4(5)$ are the sizes of the three-dimensional Gaussian cubature formula and four-dimensional Bernoulli cubature formula respectively.

Importantly, when $m=7$, the left-hand side expansion of (\ref{eq:cub}) contains terms which are composed of at most three distinct canonical basis variables. Consequently a symmetric construction for dimension 3 Wiener space is sufficient for the general dimension case. A symmetric formula will include terms from a larger spanning set than the symmetrised Lyndon basis, due to the asymmetric nature of the Lyndon basis. However, the construction above contains some significant asymmetries, particularly involving the single lie bracket terms.
\section{Uses of Cubature Formula on Wiener Space}\label{chap:uses}
Consider the (multivariate) controlled differential equation (understood in the sense of Stratonovich),
\begin{equation}\label{eq:CDE}
    dX_t = \mu (X_t,t)\text{d}t+\sigma (X_t,t)\circ\text{d}B_t,
\end{equation}
coupled with some initial condition for $X_0$. The path $X_t$ is governed by a Brownian motion, subject to some drift function $\mu$ and some diffusion function $\sigma$. We are interested in computing the expected value of $\phi(X_T)$ for some $T>0$, where $\phi$ is a given bounded, Lipschitz function. Chapter \ref{chap:deriv-pricing} of this paper will explore financial market models which adopt the form (\ref{eq:CDE}).

In the absence of analytical solutions to (\ref{eq:CDE}), the widely accepted ``na\"ive" method used to approximate $\mathbb{E}[\phi(X_T)]$ is \textit{Monte Carlo sampling} \cite{MetropolisNicholas1949TMCM}. This method requires some fixed number of sample paths, and a partition of the domain $[0,T]$ into some number of intervals.  The Monte Carlo method propagates each sample path in accordance with the controlled differential equation over each interval in the partition. The resulting set of randomly-generated sample paths is then used to compute the expected value of $\phi(X_T)$. 

Naturally, the computational cost of this process is high. Since the paths are generated at random from the (infinite) set of possible paths, we require a large sample size to obtain any reliable approximations. The rough nature of Brownian motion means we also require a large number of intervals. The motivation behind cubature is to give structure to the selection of such paths and produce a method that requires far fewer sample paths or intervals, without altering the success of the approximation.

This chapter will suggest two alternative methods to Monte Carlo sampling that utilise cubature formulae. Namely, we introduce an adaptation of Taylor's theorem that applies to stochastic processes, followed by a more sophisticated approach known as the log-ODE method. The chapter will conclude with a final section discussing the expected path-signature, and how cubature methods may pave a way towards a quantifiable approximation for it.

\subsection{Taylor method}
If we rewrite (\ref{eq:CDE}) using an augmented Brownian motion, we can combine the drift and diffusion functions into a single linear map $f:\mathbb{R}^n\rightarrow \mathbb{R}^{d+1}$. The differential equation becomes,
\begin{align}\label{eq:CDE-aug}
    \text{d}X_t = f(X_t)\circ\text{d}\widehat{B}_t.
\end{align}
We will make use of vector field notation to represent the derivatives of $f$. That is, for each value of $x\in\mathbb{R}^n$, we will define a series of functions that describes the directional derivatives in direction $x$.
\begin{definition}[Derivatives as a vector field]
    Define $f^{\circ k}:\mathbb{R}^n\rightarrow L \big ((\mathbb{R}^n)^{\otimes k},\mathbb{R}^d\big )$ recursively by,
    \begin{enumerate}[(I)]
        \item $f^{\circ 0}(x)=x$;
        \item $f^{\circ 1}(x)=f(x)$;
        \item $f^{\circ k}(x)=D\big (f^{\circ k-1}\big )(x)f(x)$;
    \end{enumerate}
    for $k\geq 2$, $x\in\mathbb{R}^d$ and where $D(f^{\circ k})$ denotes the Fr\'echet derivative of $f^{\circ k}$.    
\end{definition}
Viewing derivatives in this way sheds light on a version of Taylor's theorem that applies to stochastic differential equations. It is stated here in the context of Wiener space.
\begin{theorem}[Taylor's theorem in Wiener space]\label{thm:taylor}
    Let $\phi : \mathbb{R}^n\rightarrow \mathbb{R}$ be a smooth, bounded function. Let $\mathcal{S}_{[0,T]}^k(\circ\widehat{B})$ to be the $k^\text{th}$ level of the path-signature $S_{[0,T]}(\circ\widehat{B})$, i.e. the projection of the path-signature onto $(\mathbb{R}^{d+1})^{\otimes k}$. Define,
    \[
        \text{Taylor}(X_0,f,S^{(m)}_{[0,T]}(\circ\widehat{B}),\phi):=\sum_{k=0}^mf^{\circ k}\big (\phi(X_0)\big)\mathcal{S}^k_{[0,T]}(\circ\widehat{B}).
    \]
    Then $\hat{\phi}(X_T):=$Taylor$(X_0,f,S^{(m)}_{[0,T]}(\circ\widehat{B}))$ is an approximation for $\phi(X_T)$. Moreover,
    \[
        \max_{x\in \mathbb{R}^n}\sqrt{\mathbb{E}\big [\big(\phi(X_T)-\hat{\phi}(X_T)\big )^2\big ]}\leq CT^\frac{m+1}{2}\sup_{1\leq k\leq m}\big\|f^{\circ k}\big (\phi(\cdot)\big)\mathcal{S}^k_{[0,T]}(\circ\widehat{B})\big \|_\infty,
    \]
    for some constant $C$ that depends only on $d$ and $m$.
\end{theorem}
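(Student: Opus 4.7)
The plan is to derive the Taylor approximation by iterating the Stratonovich chain rule for the SDE (\ref{eq:CDE-aug}) and then control the remainder using moment bounds for iterated Stratonovich integrals of the augmented Brownian motion. First, I would apply the Stratonovich chain rule once to $\phi(X_T)$, obtaining
\[
    \phi(X_T) = \phi(X_0) + \int_0^T (f^{\circ 1}\phi)(X_s) \circ d\widehat{B}_s,
\]
where $f^{\circ 1}\phi$ is interpreted as the directional derivative of $\phi$ along the vector field $f$, consistent with the recursion defining $f^{\circ k}$. Iterating this identity $m$ times on the integrand $(f^{\circ 1}\phi)(X_s)$, and reindexing, one obtains after tracking multi-indices
\[
    \phi(X_T) = \sum_{k=0}^m f^{\circ k}\bigl(\phi(X_0)\bigr)\,\mathcal{S}^k_{[0,T]}(\circ\widehat{B}) + R_m(T),
\]
where the remainder $R_m(T)$ is a finite sum of iterated Stratonovich integrals of order $m+1$ of the form $\int_{0<t_1<\cdots<t_{m+1}<T} f^{\circ (m+1)}\bigl(\phi(X_{t_1})\bigr)\,\circ d\widehat{B}_{t_1}\otimes\cdots\otimes\circ d\widehat{B}_{t_{m+1}}$ along the trajectory of $X$. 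The identification of the iterated Stratonovich integrals against $\widehat{B}$ with the signature levels $\mathcal{S}^k_{[0,T]}(\circ \widehat{B})$ is immediate from the definition of the signature in Section~\ref{sec:path-sigs}.

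Next, I would show that $\hat{\phi}(X_T)$ as defined equals precisely the sum $\sum_{k=0}^m f^{\circ k}(\phi(X_0))\mathcal{S}^k_{[0,T]}(\circ\widehat{B})$, so that $\phi(X_T) - \hat{\phi}(X_T) = R_m(T)$. The $L^2$-estimate then reduces to bounding $\mathbb{E}[R_m(T)^2]$. Here one invokes the standard moment estimates for iterated Stratonovich integrals driven by the augmented Brownian motion (see for instance the reference \cite{alma991022208701907026}): an iterated Stratonovich integral of order $k$ over $[0,T]$ with bounded integrand has second moment of order $T^k$, because each Brownian component scales like $T^{1/2}$ and each time-component scales like $T$, which is at least as good. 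Applying this to $R_m(T)$ with $k = m+1$, and pulling out the supremum of the vector-field factor, yields
\[
    \sqrt{\mathbb{E}[R_m(T)^2]} \leq C\,T^{\frac{m+1}{2}}\sup_{1\leq k\leq m}\bigl\|f^{\circ k}\bigl(\phi(\cdot)\bigr)\,\mathcal{S}^k_{[0,T]}(\circ \widehat{B})\bigr\|_\infty,
\]
with a combinatorial constant $C$ depending only on $d$ and $m$ (coming from the number of multi-indices of length $\leq m+1$ over the alphabet $\{0,1,\dots,d\}$). Taking the supremum over starting points $x \in \mathbb{R}^n$ does not affect the argument because $\phi$, hence each $f^{\circ k}(\phi(\cdot))$, is assumed bounded.

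The main obstacle is the proper book-keeping in the iteration step, i.e. verifying by induction on $m$ that applying the Stratonovich chain rule once to the integrand $f^{\circ k}(\phi(X_s))$ produces exactly the term $f^{\circ (k+1)}(\phi(X_s))$ contracted against the next Brownian increment, so that after $m$ iterations the partial sum collapses to $\sum_{k=0}^m f^{\circ k}(\phi(X_0))\mathcal{S}^k_{[0,T]}(\circ\widehat{B})$ plus a clean order-$(m{+}1)$ remainder. This is essentially a tensorial unfolding of the chain rule, but it requires careful notation to match the multi-indexed iterated integrals on the left with the symmetrised contractions defining $f^{\circ k}$ on the right. Once this identification is clean, the $L^2$ bound on the remainder is a straightforward application of the Brownian scaling $\mathbb{E}[(\widehat{B}^i_t)^2] \lesssim t$, combined with Cauchy–Schwarz on the simplex of integration times and the boundedness of the integrand.
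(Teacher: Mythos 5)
The paper offers no proof of its own here: it defers entirely to the stochastic Taylor expansion of Kloeden and Platen \cite{KloedenP.E.1991SaIS}, and your proposal follows exactly that standard route (iterate the Stratonovich chain rule, identify the partial sums with the signature levels of $\widehat{B}$, bound the order-$(m+1)$ remainder in $L^2$ using Brownian scaling). So in approach you are aligned with the source the paper relies on.

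There is, however, a genuine gap in your final step. The remainder you construct has integrand $f^{\circ (m+1)}\big(\phi(X_{t_1})\big)$ (and, once the Stratonovich integrals are rewritten in It\^o form so that isometry/BDG estimates can be applied, additional terms involving $f^{\circ (m+2)}$). Its $L^2$ norm is therefore controlled by the supremum norms of the iterated vector fields of order $m+1$ and $m+2$ applied to $\phi$, not by $\sup_{1\leq k\leq m}\big\|f^{\circ k}\big(\phi(\cdot)\big)\mathcal{S}^k_{[0,T]}(\circ\widehat{B})\big\|_\infty$; the sentence ``pulling out the supremum of the vector-field factor yields'' the stated bound does not follow from your remainder expression, since $f^{\circ(m+1)}$ does not occur among the factors with $k\leq m$. (This is the form the estimate actually takes in Proposition~2.1 of Lyons and Victoir \cite{LyonsTerry2004CoWs}, where the supremum runs over degrees between $m+1$ and $m+2$; the theorem as stated in this paper is a loose paraphrase, so the difficulty is partly inherited, but a proof must either bound the correct quantities or restate the estimate accordingly.) A second, smaller issue: the moment bound you invoke --- second moment of order $T^{k}$ for an order-$k$ iterated Stratonovich integral with bounded adapted integrand --- cannot be obtained by ``Cauchy--Schwarz on the simplex,'' because $\circ\,\text{d}B$ is not a measure of finite variation; the standard argument converts to It\^o form and proceeds by induction with the It\^o isometry or the BDG inequality, which is precisely where the extra $f^{\circ(m+2)}$ terms, and the need for boundedness of derivatives of $f$ as well as of $\phi$, enter.
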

Further details and a full proof of Theorem \ref{thm:taylor} are found in Kloeden and Platen \cite{KloedenP.E.1991SaIS}. The important insight is that, given $\phi$ is suitably chosen, the error term converges to zero as $T\rightarrow 0$, at a rate that is exponential in $m$.

To utilise a cubature formula in association with Taylor's theorem, we replace the signature with the expected signature - which we have approximated using cubature.  The Taylor expansion consequently yields an approximation for $\mathbb{E}[\phi(X_T)]$, and approximating in this way will be referred to henceforth as the \textit{Taylor method}. The error term remains of order $O(T^\frac{m+1}{2})$, though this is in no way trivial and should be checked. The details for this can be found in Section 3 of Lyons and Victoir \cite{LyonsTerry2004CoWs}. Furthermore, using the Malliavin calculus in Kusuoka and Stroock \cite{KusuokaShigeo1984AotM}, it is possible to loosen the condition on $\phi$ from smooth to Lipschitz.

\subsection{Log-ODE method}
As briefly mentioned in Section \ref{sec:path-sigs}, the log-signature is often a more efficient object to work with than the path-signature itself, since it omits a number of redundant terms. This motivated the development of the Log-ODE method, a method which adapts the Taylor method. This method forms a system of ODEs by acting the Taylor expansion on the log-signature instead of the signature.
\begin{theorem}[Log-ODE method]
    Define the action of the Taylor expansion, $\hat{f}:\mathbb{R}^{d+1}\rightarrow L\big (T^N(\mathbb{R}^n),\mathbb{R}^{d+1}\big )$ by $\hat{f}(z)=\text{Taylor}(z,f,\cdot\ ,\phi)$. Define the ODE,
    \begin{align*}
        \frac{\text{d}z}{\text{d}u}&=\hat{f}(z)\log\big (S_{[0,T]}^{(m)}(\circ\widehat{B})\big );\\
        z(0)&=\phi(X_0).        
    \end{align*}
    Then, $\phi(X_T)$ is approximated by $z(1)$.
\end{theorem}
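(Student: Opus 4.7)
The plan is to reduce the Log-ODE method to the Taylor method of Theorem~\ref{thm:taylor} by exploiting the exponential--logarithm duality on the truncated tensor algebra. Heuristically, solving an ODE driven by a Lie polynomial $\ell$ over unit time should yield the same output as the Taylor expansion applied to $\pi_m\exp(\ell)$; since by Chen's theorem $\pi_m\exp\bigl(\log S^{(m)}_{[0,T]}(\circ\widehat B)\bigr) = S^{(m)}_{[0,T]}(\circ\widehat B)$, the Log-ODE output will then coincide with the Taylor output, and the error bound of Theorem~\ref{thm:taylor} transfers immediately.

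First, I would invoke Chen's theorem to observe that $\log\bigl(S^{(m)}_{[0,T]}(\circ\widehat B)\bigr)$ is a Lie polynomial in $\mathcal L^{(m)}(\mathbb R^{d+1})$ and that its formal exponential, truncated at level $m$, recovers the truncated signature exactly. This is the identity which links the two methods.

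Second, and this is the main algebraic step, I would show that the time-$1$ value $z(1)$ of the ODE $\dot z = \hat f(z)\,\ell$ equals $\hat f(\phi(X_0))\,\pi_m\exp(\ell)$ for any Lie polynomial $\ell\in\mathcal L^{(m)}(\mathbb R^{d+1})$. The argument is a Picard-style expansion of $z(u)$: at each iteration one obtains an integrated product of $\hat f$ and its Fr\'echet derivatives along the trajectory, and by the recursive definition of $f^{\circ k}$ these assemble into $f^{\circ k}(\phi(X_0))$ paired with $\ell^{\otimes k}$, carrying a combinatorial factor $1/k!$ coming from the $k$-fold nested integral over $[0,1]$. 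Truncating at level $m$ kills all contributions of degree above $m$, leaving precisely the expansion of $\pi_m\exp(\ell)$ contracted with $f^{\circ k}$.

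Third, substituting $\ell = \log\bigl(S^{(m)}_{[0,T]}(\circ\widehat B)\bigr)$ and using Chen's identity from the first step gives
\[
    z(1) = \hat f(\phi(X_0))\,\pi_m\exp\bigl(\log S^{(m)}_{[0,T]}(\circ\widehat B)\bigr) = \hat f(\phi(X_0))\,S^{(m)}_{[0,T]}(\circ\widehat B) = \text{Taylor}\bigl(X_0, f, S^{(m)}_{[0,T]}(\circ\widehat B), \phi\bigr).
\]
The Log-ODE approximation therefore coincides with the Taylor approximation, and Theorem~\ref{thm:taylor} delivers an $L^2$ error of order $T^{(m+1)/2}$.

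The principal obstacle is the algebraic matching in the second step. Book-keeping the $1/k!$ factors from the nested time integrals against those in the tensor-algebra exponential, and verifying that the iterated Fr\'echet derivatives $f^{\circ k}$ contract correctly with $\ell^{\otimes k}$ when $\ell$ contains Lie brackets (so that the antisymmetrisation of the bracket interacts with the symmetrisation inherent to $\pi_m\exp$), is the delicate combinatorial ingredient. Once this identification is made rigorous, the remainder of the argument is essentially a transfer of the Taylor error bound.
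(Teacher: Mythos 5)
The paper does not actually prove this statement itself: it defers the error bound to Lemma 15 of Boutaib \textit{et al.} \cite{BoutaibYouness2013DEeo}, so your argument has to stand on its own, and its central step fails. It is not true that the time-$1$ solution of $\dot z=\hat f(z)\,\ell$ equals $\hat f(\phi(X_0))\,\pi_m\exp(\ell)$. The ODE solution is the genuine time-$1$ flow of the vector field $z\mapsto\hat f(z)\ell$; its Picard/Taylor expansion contains contributions of every order, and there is no truncation operator inside the ODE that ``kills'' the terms of graded degree above $m$ --- truncating $\ell$ at level $m$ does not truncate the flow. Moreover, since $\hat f$ already bundles $f^{\circ 1},\dots,f^{\circ m}$, differentiating it along the trajectory produces mixed terms (products of iterated derivatives paired with tensor products of the different graded components of $\ell$) that are not of the form $f^{\circ k}(\phi(X_0))\,\ell^{\otimes k}$. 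If the two outputs were literally equal, the Log-ODE method would be redundant: its interest lies exactly in being the true flow rather than the truncated polynomial, and the two methods coincide only modulo higher-order terms.

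What is true, and is the actual content of the theorem, is that $z(1)$ agrees with $\text{Taylor}\bigl(X_0,f,S^{(m)}_{[0,T]}(\circ\widehat B),\phi\bigr)$ up to a remainder all of whose terms have graded degree exceeding $m$; one must then show this remainder is $O\bigl(T^{(m+1)/2}\bigr)$. That requires quantitative input your algebra does not supply: the Brownian scaling under which the degree-$k$ component of $\log S^{(m)}_{[0,T]}(\circ\widehat B)$ is of order $T^{k/2}$, together with sup bounds on the iterated vector fields $f^{\circ k}$ (and a Gr\"onwall-type control of the flow), so that each neglected term of degree $k>m$ contributes at most $CT^{k/2}\leq CT^{(m+1)/2}$ for $T\leq 1$. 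This estimate is precisely what the cited Lemma 15 provides, and it is the part your proposal dismisses as bookkeeping. Your first and third steps (Chen's theorem and the substitution $\ell=\log S^{(m)}_{[0,T]}(\circ\widehat B)$) are fine, but as written the proof asserts an identity that is false and omits the analytic estimate that constitutes the real proof.
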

As with the Taylor method, given that $\phi$ is bounded and Lipschitz, the error is bounded above by $CT^\frac{m+1}{2}$ for some constant $C$. Proof of this result can be found in Lemma 15 of Boutaib \textit{et al.} \cite{BoutaibYouness2013DEeo}. As before, we substitute the log-signature for the expected log-signature, which we can approximate using a cubature formula. A particularly elegant consequence of this is that the lie polynomial formulation of a cubature formula already approximates the log-signature. 

The main drawback to the log-ODE method is the need to solve differential equations. If a cubature formula has size $n$, then applying the log-ODE method over a partition of size $k$ requires the solving of $n^k$ differential equations. This value can get extremely large, so running numerical experiments will require the use of efficient ODE solvers and parallelisation techniques to maintain a low runtime.

\subsection{Approximation of the path-signature}\label{sec:path-sig-approx}
Both the Taylor and Log-ODE method can approximate $\mathbb{E}[X_T-X_0]$, the first level of the expected signature. A principal observation to make is that, given a controlled differential equation of the form (\ref{eq:CDE-aug}), we can formulate a system of controlled differential equations that describe the path-signature of $S_{[0,T]}(X_t)$. The equations are defined recursively by,
\[
    \text{d}S_{[0,T]}^{(k_1,\cdots ,k_n)}(X_t)=S_{[0,T]}^{(k_1,\cdots ,k_{n-1})}(X_t)f_{k_n}(X_t^{k_n})\circ\text{d}\widehat{B}_t,
\]
where $f_k$ denotes the $k^\text{th}$ dimension of $f$. The triangular structure to this system enables the computation of higher-order signature levels using only the lower-order levels. Therefore, recursively applying either the Taylor or Log-ODE method can produce an approximation of the expected (truncated) path-signature.

Applying cubature methods iteratively in this way engenders error growth as expected signature level increases. Consequently, approximating in this way will only hold useful applications up to the first few levels and after this point the compounded error term will become too large. Nonetheless, even merely the first few levels are extremely useful, especially since Brownian motion only has a H\"older exponent of $\frac{1}{2}$.

\appendix

\section{Degree 7 expected signature of augmented Brownian motion written in the symmetrised Lyndon basis}\label{app:expansion}
The expansion of the expression,
\[
    \pi_m\Bigg (\exp\Big (\epsilon_0 +\frac{1}{2}\sum_{i=1}^d \epsilon_i\otimes\epsilon_i\Big )\Bigg ),
\]
written in the symmetrised Lyndon basis is given almost in full by Litterer \cite{LittererThesis}. Additionally required is the following lemma:
\begin{lemma}
    Expressed in the symmetrised Lyndon basis,
    \begin{align*}(\varepsilon_i^{\otimes2}&,\varepsilon_j^{\otimes2},\varepsilon_k^{\otimes2}) = \frac{2}{45}\{([\varepsilon_i,[\varepsilon_j,[[\varepsilon_j,\varepsilon_k],\varepsilon_k]]],\varepsilon_i)+([[[[\varepsilon_i,\varepsilon_k],\varepsilon_k],\varepsilon_j],\varepsilon_j],\varepsilon_i)\\
    &+(\varepsilon_j,[\varepsilon_i,[\varepsilon_i,[[\varepsilon_j,\varepsilon_k],\varepsilon_k]]])-(\varepsilon_j,[[\varepsilon_i,\varepsilon_j],[[\varepsilon_i,\varepsilon_k],\varepsilon_k]])+(\varepsilon_k,[\varepsilon_i,[\varepsilon_i,[\varepsilon_j,[\varepsilon_j,\varepsilon_k]]]])\\
    &+(\varepsilon_k,[[[\varepsilon_i,\varepsilon_j],\varepsilon_j],[\varepsilon_i,\varepsilon_k]])\}+\frac{1}{9}\{([\varepsilon_i,[\varepsilon_j,\varepsilon_k]],[\varepsilon_i,[\varepsilon_j,\varepsilon_k]])+([[\varepsilon_i,\varepsilon_k],\varepsilon_j],[\varepsilon_i,[\varepsilon_j,\varepsilon_k]])\\
    &+([[\varepsilon_i,\varepsilon_k],\varepsilon_j],[[\varepsilon_i,\varepsilon_k],\varepsilon_j])\}+\frac{2}{15}\{([[\varepsilon_i,[\varepsilon_j,\varepsilon_k]],[\varepsilon_j,\varepsilon_k]],\varepsilon_i)+(\varepsilon_j,[[\varepsilon_i,\varepsilon_k],[[\varepsilon_i,\varepsilon_k],\varepsilon_j]])\\
    &+(\varepsilon_k,[[\varepsilon_i,\varepsilon_j],[[\varepsilon_i,\varepsilon_k],\varepsilon_j]])\}+\frac{1}{9}\{([\varepsilon_i,[[\varepsilon_j,\varepsilon_k]],\varepsilon_k],[\varepsilon_i,\varepsilon_j])+([\varepsilon_i,\varepsilon_k],[\varepsilon_i,[\varepsilon_j,[\varepsilon_j,\varepsilon_k]]])\\
    &+([\varepsilon_i,\varepsilon_k],[[\varepsilon_i,[\varepsilon_j,\varepsilon_k]],\varepsilon_j])+([[[\varepsilon_i,\varepsilon_k],\varepsilon_j],\varepsilon_j],[\varepsilon_i,\varepsilon_k])+([[\varepsilon_i,\varepsilon_k],[\varepsilon_j,\varepsilon_k]],[\varepsilon_i,\varepsilon_j])\\
    &+([[[\varepsilon_i,\varepsilon_k],\varepsilon_k],\varepsilon_j],[\varepsilon_i,\varepsilon_j])+([\varepsilon_j,\varepsilon_k],[\varepsilon_i,[\varepsilon_i,[\varepsilon_j,\varepsilon_k]]])-([\varepsilon_j,\varepsilon_k],[[\varepsilon_i,\varepsilon_j],[\varepsilon_i,\varepsilon_k]])\}\\
    &+\frac{1}{15}\{([[\varepsilon_i,[[\varepsilon_j,\varepsilon_k],\varepsilon_k]],\varepsilon_j],\varepsilon_i)+([[\varepsilon_i,\varepsilon_k],[\varepsilon_j,[\varepsilon_j,\varepsilon_k]]],\varepsilon_i)-(\varepsilon_j,[[\varepsilon_i,[\varepsilon_j,\varepsilon_k]],[\varepsilon_i,\varepsilon_k]])\\
    &+(\varepsilon_k,[[\varepsilon_i,\varepsilon_j],[\varepsilon_i,[\varepsilon_j,\varepsilon_k]]])\}+\frac{4}{45}\{([[[\varepsilon_i,\varepsilon_k],[\varepsilon_j,\varepsilon_k]],\varepsilon_j],\varepsilon_i)\}+\frac{1}{18}\{([[\varepsilon_i,\varepsilon_k],\varepsilon_k],[[\varepsilon_i,\varepsilon_j],\varepsilon_j])
\end{align*}
\begin{align*}
    &+([\varepsilon_j,[\varepsilon_j,\varepsilon_k]],[\varepsilon_i,[\varepsilon_i,\varepsilon_k]])+([[\varepsilon_j,\varepsilon_k],\varepsilon_k],[\varepsilon_i,[\varepsilon_i,\varepsilon_j]])\}+\frac{1}{45}\{(\varepsilon_j,[\varepsilon_i,[[\varepsilon_i,\varepsilon_k],[\varepsilon_j,\varepsilon_k]]])\\
    &+(\varepsilon_j,[\varepsilon_i,[[[\varepsilon_i,\varepsilon_k],\varepsilon_k],\varepsilon_j]])+(\varepsilon_k,[\varepsilon_i,[[\varepsilon_i,[\varepsilon_j,\varepsilon_k]],\varepsilon_j]])+(\varepsilon_k,[\varepsilon_i,[[[\varepsilon_i,\varepsilon_k],\varepsilon_j],\varepsilon_j]])\}\\
    &+\frac{1}{2}\{(\varepsilon_j,\varepsilon_j,[\varepsilon_i,\varepsilon_k],[\varepsilon_i,\varepsilon_k])+([\varepsilon_j,\varepsilon_k],[\varepsilon_j,\varepsilon_k],\varepsilon_i,\varepsilon_i)+(\varepsilon_k,\varepsilon_k,[\varepsilon_i,\varepsilon_j],[\varepsilon_i,\varepsilon_j])\}\\
    &+\frac{1}{3}\{(\varepsilon_j,\varepsilon_j,[[\varepsilon_i,\varepsilon_k],\varepsilon_k],\varepsilon_i)+([[\varepsilon_j,\varepsilon_k],\varepsilon_k],\varepsilon_j,\varepsilon_i,\varepsilon_i)+(\varepsilon_k,\varepsilon_j,\varepsilon_j,[\varepsilon_i,[\varepsilon_i,\varepsilon_k]])\\
    &+(\varepsilon_k,[\varepsilon_j,[\varepsilon_j,\varepsilon_k]],\varepsilon_i,\varepsilon_i)+(\varepsilon_k,\varepsilon_k,[[\varepsilon_i,\varepsilon_j],\varepsilon_j],\varepsilon_i)+(\varepsilon_k,\varepsilon_k,\varepsilon_j,[\varepsilon_i,[\varepsilon_i,\varepsilon_j]])\}\\
    &+\frac{2}{3}\{([\varepsilon_j,\varepsilon_k],\varepsilon_j,[\varepsilon_i,\varepsilon_k],\varepsilon_i)+(\varepsilon_k,\varepsilon_j,[\varepsilon_i,\varepsilon_k],[\varepsilon_i,\varepsilon_j])-(\varepsilon_k,[\varepsilon_j,\varepsilon_k],[\varepsilon_i,\varepsilon_j],\varepsilon_i)\}\\
    &+\{(\varepsilon_k,\varepsilon_k,\varepsilon_j,\varepsilon_j,\varepsilon_i,\varepsilon_i)\}.
\end{align*}
\end{lemma}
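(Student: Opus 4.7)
The statement is a Poincar\'e-Birkhoff-Witt decomposition. The LHS $(\varepsilon_i^{\otimes 2},\varepsilon_j^{\otimes 2},\varepsilon_k^{\otimes 2})$ is a specific element of the multi-graded piece of $T(\mathbb{R}^d)$ of multi-degree $(2,2,2)$ in the three distinct letters $\varepsilon_i,\varepsilon_j,\varepsilon_k$, namely the block-symmetrised tensor $\tfrac{1}{6}\sum_{\sigma\in S_3}\varepsilon_{\sigma(i)}\otimes\varepsilon_{\sigma(i)}\otimes\varepsilon_{\sigma(j)}\otimes\varepsilon_{\sigma(j)}\otimes\varepsilon_{\sigma(k)}\otimes\varepsilon_{\sigma(k)}$ that arises from expanding $\exp\bigl(\tfrac{1}{2}\sum_\ell \varepsilon_\ell^{\otimes 2}\bigr)$ at this multi-degree. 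Theorem~\ref{thm:PBW} guarantees that the symmetrised Lyndon basis elements inside this multi-graded piece form a basis, so the expansion claimed by the lemma exists and is unique; the work is to determine the explicit coefficients.

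My plan is to perform the decomposition constructively. First, I would enumerate the Lyndon basis elements of $\mathcal{L}(\mathbb{R}^3)$ with multi-degrees bounded componentwise by $(2,2,2)$, then list the symmetrised products of those elements of total multi-degree exactly $(2,2,2)$: this yields the finite set of candidate RHS basis vectors, bounded in number by $\dim=\binom{6}{2,2,2}=90$. Next, I would iteratively apply the identity $u\otimes v=(u,v)+\tfrac{1}{2}[u,v]$ to each of the six tensor products making up the LHS, systematically converting adjacent tensor factors into sums of symmetrised products of iterated Lie brackets; averaging over the six permutations and rewriting each bracket into canonical Lyndon orientation via antisymmetry and the Jacobi identity then produces the stated expansion. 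An equivalent, probably cleaner, verification strategy is a coefficient-by-coefficient comparison in the ordinary tensor-monomial basis $\{\varepsilon_{a_1}\otimes\cdots\otimes\varepsilon_{a_6}\}$: the LHS assigns weight $1/6$ to exactly the six ``block'' monomials and $0$ to the remaining $84$, so one need only expand each symmetrised product on the RHS into monomials and check that all $90$ coefficients agree.

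The main obstacle is the overwhelming combinatorial scale. The $90$-dimensional target, the large number of distinct Lyndon basis elements of length up to $6$, and the preliminary normalisation required to put every bracket on the RHS into Lyndon orientation (since several appear in non-canonical form) make the computation impractical by hand and extremely prone to bookkeeping errors. I would therefore perform the verification symbolically, using a computer algebra implementation of the Lyndon-basis algorithms of Reutenauer~\cite{alma990107836220107026} together with exact rational arithmetic, which would simultaneously confirm the stated coefficients and certify that no basis term has been omitted from the list.
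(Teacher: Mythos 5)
Your proposal is correct and in essence coincides with what the paper does: the paper states this expansion without a hand proof, deferring to symbolic computation (the Python code of Appendix~\ref{app:code}) exactly as you propose, and your reduction of the identity to a coefficient-by-coefficient check of the $\binom{6}{2,2,2}=90$ tensor monomials of multi-degree $(2,2,2)$ is the right finite verification. Your second strategy (expand the right-hand side into monomials and compare with the six block monomials of weight $\tfrac{1}{6}$) is the cleaner and fully rigorous one, since it does not even require the PBW uniqueness argument, only equality of two explicit tensors.
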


\section{Code to verify the degree 7 cubature formula}\label{app:code}
The following Python code produces the degree 7 cubature formula and verifies the result. The code requires Python package \texttt{numpy} as well as the packages \texttt{free\textunderscore lie\textunderscore alg} \texttt{ebra}, written by Reizenstein, \texttt{https://github.com/crispitagorico/hallareas}, and \texttt{cubature\textunderscore construction}, written by myself and located at \texttt{https://anonymous.} \texttt{4open.science/r/thesis-code}.
\begin{verbatim}
import numpy as np
from free_lie_algebra import word2Elt, Word, exp, distance
from free_lie_algebra import lieProduct as prod
from cubature_construction import gaussian_cubature_7, truncateToLevel,
    verify_lhs
from cubature_construction import lie_sum as sum

d = 3
m = 7

z, gauss_lam = gaussian_cubature_7(d = d) # degree 7 cubature on Gaussian

e = [word2Elt(Word([i])) for i in range(d+1)] # vector space basis
lie_poly = [] # list to hold the cubature lie polynomials
lam = []  # list to hold the cubature weights

for index, zk in enumerate(z):
    sum0 = e[0]
    sum1 = sum([zk[i-1]*e[i] for i in range(1,d+1)])
    sum4a = sum([sum([zk[i-1]*prod(prod(e[i],e[j]),e[j]) 
        for j in range(1,d+1)]) for i in range(1,d+1)])
    sum4b = zk[0]*prod(prod(e[1],e[2]),e[2])
        +zk[1]*prod(prod(e[2],e[3]),e[3])
        +zk[2]*prod(prod(e[3],e[1]),e[1])
    sum4c = zk[0]*prod(prod(e[1],e[3]),e[3])
        +zk[1]*prod(prod(e[2],e[1]),e[1])
        +zk[2]*prod(prod(e[3],e[2]),e[2])
    sum5 = zk[0]*prod(prod(e[1],e[2]),e[3])
        +zk[1]*prod(prod(e[2],e[3]),e[1])
        +zk[2]*prod(prod(e[3],e[1]),e[2])  
    sum6 = sum([sum([sum([
        zk[i-1]*prod(prod(prod(prod(e[j],e[k]),e[k]),e[j]),e[i])
        +zk[k-1]*prod(prod(prod(prod(e[i],e[j]),e[j]),e[k]),e[i])
        +zk[i-1]*prod(prod(prod(prod(e[k],e[i]),e[j]),e[j]),e[k])
        +zk[k-1]*prod(prod(prod(prod(e[j],e[i]),e[i]),e[k]),e[j])
        +zk[j-1]*prod(prod(prod(prod(e[k],e[j]),e[i]),e[i]),e[k])
        +zk[j-1]*prod(prod(prod(prod(e[i],e[k]),e[k]),e[i]),e[j])
        for k in range(j+1,d+1)]) for j in range(i+1,d+1)])
        for i in range(1,d+1)])
    sum7 = sum([sum([sum([
        zk[j-1]*prod(prod(prod(prod(e[k],e[i]),e[i]),e[j]),e[k])
        +zk[i-1]*prod(prod(prod(prod(e[k],e[j]),e[j]),e[i]),e[k])
        +zk[k-1]*prod(prod(prod(prod(e[j],e[k]),e[i]),e[i]),e[j])
        +zk[k-1]*prod(prod(prod(prod(e[i],e[k]),e[j]),e[j]),e[i])
        for k in range(j+1,d+1)]) for j in range(i+1,d+1)])
        for i in range(1,d+1)]) 
    sum8 = sum([sum([sum([
        zk[i-1]*prod(prod(prod(prod(e[i],e[k]),e[k]),e[j]),e[j])
        +zk[i-1]*prod(prod(prod(prod(e[i],e[j]),e[j]),e[k]),e[k])
        +zk[i-1]*prod(prod(prod(prod(e[j],e[i]),e[k]),e[k]),e[j])
        +zk[j-1]*prod(prod(prod(prod(e[j],e[k]),e[k]),e[i]),e[i])
        +zk[k-1]*prod(prod(prod(prod(e[k],e[i]),e[i]),e[j]),e[j])
        +zk[j-1]*prod(prod(prod(prod(e[i],e[j]),e[k]),e[k]),e[i])
        +zk[j-1]*prod(prod(prod(prod(e[j],e[i]),e[i]),e[k]),e[k])
        +zk[k-1]*prod(prod(prod(prod(e[k],e[j]),e[j]),e[i]),e[i])
        for k in range(j+1,d+1)]) for j in range(i+1,d+1)])
        for i in range(1,d+1)])
    sum10 = sum([sum([
        zk[i-1]*prod(prod(prod(prod(e[i],e[j]),e[j]),e[j]),e[j])
        +zk[j-1]*prod(prod(prod(prod(e[j],e[i]),e[i]),e[i]),e[i]) 
        for j in range(i+1,d+1)]) for i in range(1,d+1)])
    sum11 = sum([sum([
        zk[j-1]*prod(prod(prod(prod(e[i],e[j]),e[j]),e[j]),e[i])
        +zk[i-1]*prod(prod(prod(prod(e[j],e[i]),e[i]),e[i]),e[j])
        for j in range(i+1,d+1)]) for i in range(1,d+1)])
    sum12 = sum([sum([
        zk[j-1]*prod(prod(prod(prod(e[j],e[i]),e[j]),e[i]),e[j])
        +zk[i-1]*prod(prod(prod(prod(e[i],e[j]),e[i]),e[j]),e[i])
        for j in range(i+1,d+1)]) for i in range(1,d+1)])
    sum13 = sum([prod(prod(e[0], e[i]), e[i]) for i in range(1, d+1)])
    sum14 = sum([prod(prod(prod(prod(e[0],e[i]),e[i]),e[i]),e[i]) 
        for i in range(1,d+1)])
    sum15 = (1/120)*sum([sum([
        sym_prod([prod(prod(prod(prod(e[0],e[i]),e[i]),e[j]),e[j])])
        +sym_prod([prod(prod(prod(prod(e[0],e[j]),e[j]),e[i]),e[i])])
        for j in range(i+1,d+1)]) for i in range(1,d+1)])
    sum16 = (1/24)*(2/15)*sum([sum([
        sym_prod([prod(prod(prod(prod(e[j],e[0]),e[i]),e[i]),e[j])])    
        +sym_prod([prod(prod(prod(prod(e[i],e[0]),e[j]),e[j]),e[i])])
        for j in range(i+1,d+1)]) for i in range(1,d+1)])
    sum17 = (1/24)*(1/15)*sum([sum([
        sym_prod([prod(prod(prod(prod(e[i],e[j]),e[j]),e[0]),e[i])])
        +sym_prod([prod(prod(prod(prod(e[j],e[i]),e[i]),e[0]),e[j])])
        for j in range(i+1,d+1)]) for i in range(1,d+1)])

  
    for gamma_1 in [-1,1]:
        for gamma_2 in [-1,1]:
            for gamma_3 in [-1,1]:
                for gamma_4 in [-1,1]:
                    sum2a = np.sqrt(1/12)*(gamma_1*gamma_2*gamma_4
                        -gamma_2*zk[0]+gamma_1*zk[1])*prod(e[1],e[2])
                    sum2b = np.sqrt(1/12)*(gamma_1*gamma_3*gamma_4
                        +gamma_3*zk[0]+gamma_1*zk[2])*prod(e[1],e[3])
                    sum2c = np.sqrt(1/12)*(gamma_2*gamma_3*gamma_4
                        +gamma_3*zk[1]+gamma_2*zk[2])*prod(e[2],e[3])
                    sum3 = np.sqrt(1/12)*(gamma_3*prod(e[0],e[3])
                        -gamma_2*prod(e[0],e[2])-gamma_1*prod(e[0],e[1]))
                    sum9a = prod(prod(prod(e[3],e[2]),e[3]),e[1])
                        +prod(prod(prod(e[1],e[2]),e[3]),e[3])
                        +prod(prod(prod(e[1],e[3]),e[3]),e[2])
                        +2*prod(prod(prod(e[1],e[2]),e[2]),e[2])
                        +2*prod(prod(prod(e[1],e[2]),e[1]),e[1])
                    sum9b = prod(prod(prod(e[1],e[3]),e[2]),e[2])
                        +prod(prod(prod(e[1],e[2]),e[2]),e[3])
                        +prod(prod(prod(e[2],e[3]),e[2]),e[1])
                        +2*prod(prod(prod(e[1],e[3]),e[3]),e[3])
                        +2*prod(prod(prod(e[1],e[3]),e[1]),e[1])
                    sum9c = prod(prod(prod(e[2],e[1]),e[1]),e[3])
                        +prod(prod(prod(e[1],e[3]),e[1]),e[2])
                        +prod(prod(prod(e[2],e[3]),e[1]),e[1])
                        +2*prod(prod(prod(e[2],e[3]),e[3]),e[3])
                        +2*prod(prod(prod(e[2],e[3]),e[2]),e[2])
                    lie_poly.append(sum0+sum1+sum2a+sum2b+sum2c+sum3
                        +(1/12)*sum13+(1/360)*sum14+(1/12)*sum4a
                        +(1/6)*(gamma_1*gamma_2*sum4b+gamma_1*gamma_3*sum4c)
                        +(1/6)*gamma_2*gamma_3*sum5+(1/360)*sum6
                        +(1/180)*sum7+(1/120)*sum8
                        +(1/(24*np.sqrt(3)))*(gamma_1*gamma_2*gamma_4*sum9a
                        +gamma_1*gamma_3*gamma_4*sum9b
                        +gamma_2*gamma_3*gamma_4*sum9c)+(1/360)*sum10
                        +(1/120)*sum11+(1/90)*sum12+sum15+sum16+sum17)
                    lam.append((1/2)**4*gauss_lam[index])

rhs = sum([lam[i]*exp(lie_poly[i], maxLevel = m) for i in range(len(lie_poly))]) 
rhs = truncateToLevel(rhs, m)
print(distance(rhs, verify_lhs(m,d)))
\end{verbatim}

\bibliography{refs}
\bibliographystyle{alpha}

\end{document}